\theoremstyle{plain}
\newtheorem{thm}{Theorem}[section]
\newtheorem*{thm*}{Theorem}
\newtheorem*{mainthm*}{Main Theorem}
\newtheorem*{mainlem*}{Main Lemma}
\newtheorem{lem}[thm]{Lemma} \newtheorem*{lem*}{Lemma}
\newtheorem{claim}[thm]{Claim} \newtheorem*{claim*}{Claim}
\newtheorem{cor}[thm]{Corollary} \newtheorem*{cor*}{Corollary}
 \newtheorem*{prop*}{Proposition}
\theoremstyle{definition}
\newtheorem{defn}[thm]{Definition} \newtheorem*{defn*}{Definition}
\theoremstyle{remark}
\newtheorem{rem}[thm]{Remark} \newtheorem*{rem*}{Remark}
 \newtheorem*{example*}{Example}
 \newtheorem*{conj*}{Conjecture}
 \newtheorem*{question*}{Question}
\newcommand{\Ord}{\mathrm{Ord}}
\DeclareMathOperator{\powerset}{\mathcal{P}}
\DeclareMathOperator{\crit}{crit}
\DeclareMathOperator{\ran}{range}
\DeclareMathOperator{\Proj}{\boldsymbol{\pi}}
\DeclareMathOperator{\BP}{BP}
\DeclareMathOperator{\WF}{WF}
\begin{document}

\author{Trevor M.\ Wilson}
\title{The large cardinal strength of Weak Vop\v{e}nka's Principle}

\address{Department of Mathematics\\Miami University\\Oxford, Ohio 45056\\USA}
\email{twilson@miamioh.edu} 
\urladdr{https://www.users.miamioh.edu/wilso240}

\begin{abstract}
 We show that Weak Vop\v{e}nka's Principle, which is the statement that the opposite category of ordinals cannot be fully embedded into the category of graphs, is equivalent to the large cardinal principle Ord is Woodin, which says that for every class $C$ there is a $C$-strong cardinal. Weak Vop\v{e}nka's Principle was already known to imply the existence of a proper class of measurable cardinals. We improve this lower bound to the optimal one by defining structures whose nontrivial homomorphisms can be used as extenders, thereby producing elementary embeddings witnessing $C$-strongness of some cardinal.
\end{abstract}

\maketitle

\section{Introduction}

We work in the second-order set theory GB $+$ AC, meaning G\"{o}del--Bernays set theory with the Axiom of Choice for sets. This theory allows us to deal with with arbitrary classes and Ord-sequences of structures. Because every model of ZFC together with its definable classes forms a model of GB $+$ AC, the results of this paper also hold in ZFC for definable classes as a special case.

A \emph{graph} is a structure $\langle G; E\rangle$ where $G$ is a set and $E$ is a binary relation on $G$. A \emph{homomorphism} of graphs $\langle G; E\rangle \to \langle G'; E'\rangle$ is a function $h : G \to G'$ such that for all $\langle x_1,x_2\rangle \in E$ we have $\langle h(x_1),h(x_2)\rangle \in E'$. Sometimes we write just $G$ for a graph instead of $\langle G; E\rangle$. \emph{Vop\v{e}nka's Principle} (VP) states that the category of graphs has no large discrete full subcategory, or in other words that for every proper class of graphs there is some non-identity homomorphism among the graphs in that class.

Ad\'{a}mek, Rosick\'{y}, and Trnkov\'{a} \cite[Lemma 1]{AdaRosTrnLimitClosed} showed that VP is equivalent to the statement that the category $\Ord$ does not fully embed into the category of graphs, and they defined \emph{Weak Vop\v{e}nka Principle} (WVP) as the dual statement that the opposite category $\text{Ord}^\text{op}$ does not fully embed into the category of graphs. More explicitly:

\begin{defn}[\cite{AdaRosTrnLimitClosed}]
 \emph{Weak Vop\v{e}nka's Principle} (WVP) says that no sequence of graphs $\langle G_\alpha : \alpha \in \Ord\rangle$ has both of the following properties: whenever $\alpha \le \alpha'$ there is a unique homomorphism $G_{\alpha'} \to G_{\alpha}$ and whenever $\alpha < \alpha'$ there is no homomorphism $G_\alpha \to G_{\alpha'}$.
\end{defn}

Ad\'{a}mek, Rosick\'{y}, and Trnkov\'{a} \cite[Lemma 2]{AdaRosTrnLimitClosed} observed that that WVP follows from VP (hence the name ``weak''.) They also noted that it implies the existence of a proper class of measurable cardinals. The main result of this article is a substantial improvement to this large-cardinal lower bound, leading to an equivalence.

A variant of WVP is obtained by removing uniqueness from the definition:

\begin{defn}[\cite{AdaRosInjectivity}]
 \emph{Semi-Weak Vop\v{e}nka's Principle} (SWVP) says that no sequence of graphs $\langle G_\alpha : \alpha \in \Ord\rangle$ has both of the following properties: whenever $\alpha \le \alpha'$ there is a homomorphism $G_{\alpha'} \to G_{\alpha}$ and whenever $\alpha < \alpha'$ there is no homomorphism $G_\alpha \to G_{\alpha'}$.
\end{defn}

We have VP $\implies$ SWVP $\implies$ WVP, where the first implication is due to Ad\'{a}mek and Rosick\'{y} \cite{AdaRosInjectivity} and the second implication is trivial. Wilson \cite{WilWeakVopenka} proved that SWVP and WVP are equivalent to each other and are both strictly weaker than VP, in fact strictly weaker than the existence of a supercompact cardinal. In this article, we will show that the large cardinal principle  ``Ord is Woodin'' (defined below) implies SWVP and follows from WVP. Since SWVP trivially implies WVP, this gives the precise large cardinal strength of both principles. It also gives another proof of their equivalence to each other.
 
Before stating the definition of this large cardinal principle, we briefly review some set-theoretic terminology.

Recall that a class $M$ is called \emph{transitive} if every element of $M$ is a subset of $M$. Important examples of transitive sets are $V_\alpha$, $\alpha\in \Ord$, defined recursively by $V_0 = \emptyset$, $V_{\alpha+1} = \powerset(V_\alpha)$, and $V_\lambda = \bigcup_{\alpha < \lambda} V_\alpha$ if $\lambda$ is a limit ordinal.  In other words, $V_\alpha$ is the set of all sets of rank less than $\alpha$.  The class of all sets, $V$, is equal to $\bigcup_{\alpha \in \Ord} V_\alpha$ by the axiom of foundation. The \emph{critical point} of an elementary embedding $j : \langle V; \in \rangle \to \langle M; \in \rangle$, denoted by $\crit(j)$, is the least ordinal $\kappa$ such that $j(\kappa) \ne \kappa$.  If $j$ is not the identity, then it has a critical point $\kappa$ equal to the least rank of any set that is moved by $j$.  Moreover, $\kappa$ is a cardinal and $j(\kappa) > \kappa$.

\begin{defn}\label{defn:weakly-beta-C-strong}
 Let $C$ be a class, let $\kappa$ be a cardinal, and let $\beta$ be an ordinal.  We say that $\kappa$ is \emph{$\beta$-$C$-strong} if there is a transitive class $M$ and an elementary embedding
 \[j : \langle V; \in \rangle \to \langle M; \in \rangle\]
 such that
 \begin{enumerate}
  \item\label{item:crit} $\crit(j) = \kappa$,
  \item\label{item:V-beta} $V_\beta \subset M$, and
  \item\label{item:coherence} $j(C \cap V_\beta) \cap V_\beta = C \cap V_\beta$.
 \end{enumerate}
\end{defn}

Because condition \eqref{item:crit} implies that $\kappa$ is a measurable cardinal, we could omit from the definition the explicit requirement that $\kappa$ is a cardinal. Note that if $\kappa$ is $\beta$-$C$-strong then it is $\beta'$-$C$-strong for every ordinal $\beta' < \beta$, as witnessed by the same elementary embedding $j$.

We will sometimes write condition \eqref{item:coherence} in the abbreviated form $j(C) \cap V_\beta = C \cap V_\beta$, where for every class $C$ the class $j(C)$ is defined as $\bigcup_{\alpha \in \Ord} j(C\cap V_\alpha)$.  This produces an equivalent definition because $j(C)$ and $j(C\cap V_\beta)$ have the same intersection with $V_\beta$.

Many authors include a fourth condition $j(\kappa) > \beta$ in the definition of $\beta$-$C$-strong. This condition is not relevant to WVP, so it will not be used in this article. Adding this condition results in an equivalent definition of $\beta$-$C$-strong when $\beta$ is a successor ordinal,\footnote{See Kanamori \cite[Exercise 26.7(b)]{KanHigherInfinite} for the case $C = \emptyset$, which is based on the argument of Kunen \cite{KunElementary} for $\beta$-supercompactness and easily generalizes to an arbitrary class $C$.} so our omission of it does not affect the global notion of $C$-strongness that we define next.
 
 \begin{defn}
  Let $C$ be a class and let $\kappa$ be a cardinal.  We say that $\kappa$ is \emph{$C$-strong} if it is $\beta$-$C$-strong for every ordinal $\beta$.
 \end{defn}
 
 We may now define the large cardinal notion that we will show is equivalent to WVP. It is a standard (though lesser-used) variation of the definition of ``Woodin cardinal'':

\begin{defn}
 \emph{Ord is Woodin} means that for every class $C$ there is a $C$-strong cardinal.
\end{defn}

The main result of this article is the following.

\begin{thm}[GB + AC] \label{thm:main}
 The following statements are equivalent.
 \begin{enumerate}
  \item\label{item:Ord-is-Woodin} Ord is Woodin.
  \item\label{item:SWVP} SWVP.
  \item\label{item:WVP} WVP.
 \end{enumerate}
\end{thm}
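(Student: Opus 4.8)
The plan is to close the cycle \eqref{item:Ord-is-Woodin} $\Rightarrow$ \eqref{item:SWVP} $\Rightarrow$ \eqref{item:WVP} $\Rightarrow$ \eqref{item:Ord-is-Woodin}. The implication \eqref{item:SWVP} $\Rightarrow$ \eqref{item:WVP} is immediate: any sequence witnessing the failure of WVP has \emph{unique} (hence \emph{some}) homomorphisms $G_{\alpha'} \to G_\alpha$ for $\alpha \le \alpha'$ and no homomorphisms $G_\alpha \to G_{\alpha'}$ for $\alpha < \alpha'$, so it already witnesses the failure of SWVP. The two substantive directions are therefore ``Ord is Woodin'' $\Rightarrow$ SWVP and WVP $\Rightarrow$ ``Ord is Woodin''.

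For \eqref{item:Ord-is-Woodin} $\Rightarrow$ \eqref{item:SWVP} I would argue by contradiction. Suppose $F = \langle G_\alpha : \alpha \in \Ord\rangle$ witnesses the failure of SWVP, and code $F$ by a class $C$. Applying ``Ord is Woodin'' to $C$ gives a $C$-strong cardinal $\kappa$. Fix $\beta$ large enough that $\kappa+1 < \beta$ and $G_{\kappa+1} \in V_\beta$, and let $j : \langle V; \in\rangle \to \langle M; \in\rangle$ witness that $\kappa$ is $\beta$-$C$-strong. Then $j \restriction G_\kappa$ is a homomorphism $G_\kappa \to j(G_\kappa)$, since $j$ carries edges of $G_\kappa$ to edges of $j(G_\kappa)$. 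By elementarity $j(F)(j(\kappa)) = j(G_\kappa)$, and by the coherence condition $j(C)\cap V_\beta = C\cap V_\beta$ we get $j(F)(\kappa+1) = G_{\kappa+1}$. Since $\kappa+1 \le j(\kappa)$, the downward-homomorphism property of $j(F)$ holds in $M$ and produces a homomorphism $h : j(G_\kappa) \to G_{\kappa+1}$ in $M$. Then $h \circ (j\restriction G_\kappa) : G_\kappa \to G_{\kappa+1}$ is a homomorphism in $V$ between graphs of strictly increasing index, contradicting the forbidden-upward-homomorphism property. The key point is that no closure of $M$ beyond $V_\beta \subseteq M$ is required: $j\restriction G_\kappa$ lives in $V$ and $h$ lives in $M \subseteq V$, so the composite is a genuine homomorphism in $V$. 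This is precisely why the extra condition $j(\kappa) > \beta$ is irrelevant here and may be dropped.

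For \eqref{item:WVP} $\Rightarrow$ \eqref{item:Ord-is-Woodin} I would prove the contrapositive: assuming some class $C$ has no $C$-strong cardinal, I would construct a sequence witnessing the failure of WVP. For each cardinal $\kappa$ (none of which is $C$-strong) let $\beta(\kappa)$ be the least $\beta$ for which $\kappa$ is not $\beta$-$C$-strong. For each ordinal $\alpha$ I would define a graph $G_\alpha$ that rigidly codes the structure $\langle V_{\lambda_\alpha}; \in, C \cap V_{\lambda_\alpha}\rangle$ together with ordinal parameters, for an increasing continuous sequence $\langle \lambda_\alpha \rangle$ whose values are closure points of the map $\kappa \mapsto \beta(\kappa)$; this refines the structures behind the known proof that WVP yields a proper class of measurable cardinals. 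The coding should guarantee two things. First, for $\alpha \le \alpha'$ the \emph{only} homomorphism $G_{\alpha'} \to G_\alpha$ is the canonical one induced by the coded structures, so the $G_\alpha$ form a rigid inverse system; uniqueness here is the delicate ``weak'' ingredient and demands that the rigidifying parameters leave no room for a second homomorphism. Second --- and this is the heart of the argument advertised in the abstract --- any homomorphism $G_\alpha \to G_{\alpha'}$ with $\alpha < \alpha'$ reverses the order of the system, so it cannot be canonical and must instead encode a coherent system of measures, i.e.\ an extender. The derived ultrapower embedding $j : \langle V; \in\rangle \to \langle M; \in\rangle$ would have some critical point $\kappa$ below the rank coded by $G_\alpha$, would satisfy $V_\beta \subseteq M$ for $\beta$ about $\lambda_{\alpha'}$, and would satisfy $j(C)\cap V_\beta = C\cap V_\beta$ because $C$ is built into the coded structures. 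Since $\lambda_{\alpha'}$ is a closure point dominating $\beta(\kappa)$, this makes $\kappa$ be $\beta$-$C$-strong for some $\beta \ge \beta(\kappa)$, hence $\beta(\kappa)$-$C$-strong, contradicting the choice of $\beta(\kappa)$. Thus no upward homomorphism exists and the sequence witnesses the failure of WVP.

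The main obstacle is the extender-extraction step in the last direction. One must design the graph coding of $\langle V_{\lambda_\alpha}; \in, C\cap V_{\lambda_\alpha}\rangle$ so that a bare graph homomorphism carries \emph{exactly} the data of a $\kappa$-complete coherent system of ultrafilters with the required strength and $C$-coherence, and so that the resulting map is fully elementary on all of $V$ rather than merely on the coded fragment. This must be balanced against the opposing demand that the graphs be rigid enough to force uniqueness of the downward transition maps: there must be enough homomorphisms downward to form the inverse system, but only the extender-coding ones, and none upward. Reconciling these competing rigidity and flexibility requirements is the technical core of the proof.
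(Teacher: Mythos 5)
Your handling of \eqref{item:SWVP} $\Rightarrow$ \eqref{item:WVP} and of \eqref{item:Ord-is-Woodin} $\Rightarrow$ \eqref{item:SWVP} is correct and essentially the paper's own argument: elementarity gives the homomorphism $j \restriction G_\kappa : G_\kappa \to j(F)(j(\kappa))$, coherence gives $j(F)(\kappa+1) = G_{\kappa+1}$, a backward homomorphism of $M$ closes the gap, and absoluteness of ``being a homomorphism'' means no closure of $M$ beyond $V_\beta \subset M$ is needed.

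The problem is the direction WVP $\Rightarrow$ Ord is Woodin, where your proposal stops exactly where the work begins. You correctly identify the shape of the argument---an $\Ord$-indexed system of structures carrying $C$, canonical backward maps, and the extraction of an extender from any unexpected homomorphism---but you explicitly defer, as an ``obstacle,'' the two ingredients that constitute the entire technical content: (i) the design of structures whose bare homomorphisms carry exactly the data of an extender, and (ii) the proof that such a homomorphism yields a fully elementary $j : \langle V; \in\rangle \to \langle M; \in\rangle$ with $V_\beta \subset M$ and $j(C) \cap V_\beta = C \cap V_\beta$. The paper's answer to (i) is not a rigid graph coding of $\langle V_{\lambda}; \in, C \cap V_\lambda\rangle$ but a structure on $\powerset(V_\beta^{\mathord{<}\omega})$ equipped with boolean operations, the constants $V_\beta^k$, a wellfoundedness predicate $\WF$, inverse-projection operations, and bounded-projection operations $\BP_k$; homomorphisms of these structures are precisely countably complete, coherent, normal extenders, and the backward maps are simply $A \mapsto A \cap V_\beta^{\mathord{<}\omega}$. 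The answer to (ii) is a full ultrapower/\L o\'{s} construction: wellfoundedness of the term model from preservation of $\WF$, set-likeness, and $Y \subset j(X)$ from preservation of $\BP_k$. None of this is supplied or substituted for in your proposal, so the hard implication remains unproved.

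A second, more conceptual issue: you misplace the role of uniqueness. You propose to arrange by ``rigidifying parameters'' that the only backward homomorphism $G_{\alpha'} \to G_\alpha$ is the canonical one, and you correctly sense that this fights against the requirement that homomorphisms be flexible enough to encode extenders---but you leave the tension unresolved. The paper dissolves it: uniqueness is never proved combinatorially. Instead, if the sequence fails to be a counterexample to WVP, then either a forward homomorphism or a non-canonical backward homomorphism exists, and \emph{either} kind is a nontrivial homomorphism of $\mathscr{P}$-structures, hence an extender witnessing that some $\kappa < \beta$ is $\beta$-$C$-strong, contradicting the choice of $\beta$ in the class $S$. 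So uniqueness of the backward maps is a consequence of the non-existence of the relevant extenders, not something to be engineered in advance; attempting to force it directly, as you propose, is both unnecessary and likely impossible.
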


The proof that Ord is Woodin implies SWVP is relatively easy, so we give it now.

\begin{proof}[Proof of \eqref{item:Ord-is-Woodin} $\implies$ \eqref{item:SWVP}]
 Assume that Ord is Woodin, let $G = \langle G(\alpha) : \alpha \in \Ord \rangle$ be a sequence of graphs, and assume that whenever $\alpha \le \alpha'$ there is a homomorphism $G(\alpha') \to G(\alpha)$. It will suffice to show that there is an ordinal $\kappa$ and a homomorphism $G(\kappa) \to G(\kappa+1)$.
 
 Because Ord is Woodin, there is a $G$-strong cardinal $\kappa$. Take $\beta$ sufficiently large that the ordered pair $\langle \kappa+1, G(\kappa+1)\rangle$, which is an element of the class function $G$, is also an element of $V_\beta$. Because $\kappa$ is $\beta$-$G$-strong, there is a transitive class $M$ and an elementary embedding 
 \[j : \langle V; \in \rangle \to \langle M; \in \rangle\]
 such that $\crit(j) = \kappa$, $V_\beta \subset M$, and $j(G) \cap V_\beta = G \cap V_\beta$. Then  $\langle \kappa+1, G(\kappa+1)\rangle$ is also an element of the class function $j(G)$ (which is a sequence of graphs in $M$,) meaning
 \[ j(G)(\kappa+1) = G(\kappa+1).\]
 The elementarity of $j$ implies that the restriction $j \restriction G(\kappa)$ is a homomorphism $G(\kappa) \to j(G(\kappa))$, and also implies that $j(G(\kappa)) = j(G)(j(\kappa))$, so what we have is a homomorphism
 \[j \restriction G(\kappa) : G(\kappa) \to j(G)(j(\kappa)).\]
 By our assumption on the existence of ``backward'' homomorphisms from later graphs to earlier ones, the elementarity of $j$, and the fact that $j(\kappa) > \kappa+1$, there is a homomorphism $h$ in $M$ from $j(G)(j(\kappa))$ to $j(G)(\kappa+1)$.  The fact that $h$ is a homomorphism is absolute between $M$ and $V$, and we have $j(G)(\kappa+1) = G(\kappa+1)$ as mentioned previously, so what we have is a homomorphism
 \[h : j(G)(j(\kappa)) \to G(\kappa+1).\]
 Then the composition $h \circ (j \restriction G(\kappa))$ is a homomorphism $G(\kappa) \to G(\kappa+1)$, as desired.
\end{proof}

To complete the proof of Theorem \ref{thm:main}, because SWVP trivially implies WVP it remains to prove that WVP implies Ord is Woodin. This will be done in Section \ref{section:WVP-implies-Woodin} (except the proof of Lemma \ref{lem:every-extender-is-derived}, which will be done in Section \ref{section:every-extender-is-derived}.)  In this proof, we will not work directly with graphs. We will use the fact that for any signature with countably many finitary relation (or partial operation) symbols, the category of structures in this signature fully embeds into the category of graphs by Hedrl\'{\i}n and Pultr \cite{HedPulFullEmbeddings}. The principles VP, WVP, and SWVP can therefore be taken to apply to such general structures instead of graphs.

The key concept in the proof that WVP implies Ord is Woodin will be a certain notion of extender. Various definitions of ``extender'' can be found in the literature. They fall into two main types, which are essentially equivalent. One type of extender, which we will not use in this article, is a family of ultrafilters whose ultrapower embeddings form a directed system: see for example Kanamori \cite[Section 26]{KanHigherInfinite}. Instead we will use the other type of extender, which is a function on power sets that preserves certain structure (and can therefore be thought of as a kind of homomorphism): see for example Neeman \cite[Definition 2.1]{NeeMitchellOrder}.
 
To keep this article simple and self-contained, we will define the most convenient structures on power sets whose homomorphisms suffice for our purposes. We will not rely on any pre-existing definitions or theorems about extenders.

Once Theorem \ref{thm:main} is proved, it can be applied locally to give a characterization of Woodin cardinals. Recall that a \emph{Woodin cardinal} is an inaccessible cardinal $\delta$ such that for every set $C \subset V_\delta$ there is a cardinal $\kappa < \delta$ that is $\beta$-$C$-strong for all $\beta < \delta$.

For an inaccessible cardinal $\delta$, the two-sorted structure $\langle V_\delta, V_{\delta+1}; \in \rangle$ satisfies GB + AC, where the elements of $V_\delta$ are regarded as sets of the structure and the elements of $V_{\delta+1}$ (subsets of $V_\delta$) are regarded as classes of the structure. It is not hard to see that an inaccessible cardinal $\delta$ is Woodin if and only if the structure $\langle V_\delta, V_{\delta+1}; \in \rangle$ satisfies ``Ord is Woodin.'' Applying Theorem \ref{thm:main} in this structure immediately yields the following consequence.

\begin{cor}[ZFC] \label{cor:Woodin}
 If $\delta$ is inaccessible, then the following statements are equivalent.
 \begin{enumerate}
  \item\label{item:delta-is-Woodin} $\delta$ is a Woodin cardinal.
   
  \item\label{item:SWVP-below-delta} There is no sequence of graphs $\langle G_\alpha : \alpha <\delta\rangle$ such that each graph $G_\alpha$ has cardinality less than $\delta$, whenever $\alpha \le \alpha' < \delta$ there is a homomorphism $G_{\alpha'} \to G_{\alpha}$, and whenever $\alpha < \alpha' < \delta$ there is no homomorphism $G_\alpha \to G_{\alpha'}$.
 
  \item\label{item:WVP-below-delta} There is no sequence of graphs $\langle G_\alpha : \alpha <\delta\rangle$ such that each graph $G_\alpha$ has cardinality less than $\delta$, whenever $\alpha \le \alpha' < \delta$ there is a unique homomorphism $G_{\alpha'} \to G_{\alpha}$, and whenever $\alpha < \alpha' < \delta$ there is no homomorphism $G_\alpha \to G_{\alpha'}$.
 \end{enumerate}
\end{cor}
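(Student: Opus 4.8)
The plan is to derive Corollary~\ref{cor:Woodin} from Theorem~\ref{thm:main} by relativizing everything to the structure $\langle V_\delta, V_{\delta+1}; \in \rangle$, which (as the text notes) models GB~$+$~AC whenever $\delta$ is inaccessible. The central observation to verify carefully is the claimed equivalence: an inaccessible $\delta$ is Woodin \emph{if and only if} this two-sorted structure satisfies ``Ord is Woodin.'' Once that biconditional is in hand, the three equivalences of the corollary follow by applying Theorem~\ref{thm:main} \emph{inside} this structure and then translating the internal statements of SWVP and WVP back into the external statements \eqref{item:SWVP-below-delta} and \eqref{item:WVP-below-delta}.

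\medskip

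\noindent I would proceed as follows. First, I would spell out the dictionary between internal and external notions. The ``sets'' of $\langle V_\delta, V_{\delta+1}; \in \rangle$ are exactly the elements of $V_\delta$, and its ``classes'' are exactly the subsets of $V_\delta$; its ordinals are the ordinals below $\delta$. A ``sequence of graphs indexed by Ord'' in the sense of the structure is thus precisely an external sequence $\langle G_\alpha : \alpha < \delta \rangle$ where each $G_\alpha \in V_\delta$, i.e.\ each graph has hereditary size (hence cardinality) less than $\delta$, and the indexing sequence itself is a subset of $V_\delta$. Because the existence of a homomorphism $G_{\alpha'} \to G_\alpha$ between sets in $V_\delta$ is expressible by a quantifier over $V_\delta$ and is therefore absolute between the structure and $V$, the internal formulations of SWVP and WVP coincide exactly with clauses \eqref{item:SWVP-below-delta} and \eqref{item:WVP-below-delta}. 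Applying Theorem~\ref{thm:main} within the structure then gives that the internal ``Ord is Woodin,'' the internal SWVP, and the internal WVP are equivalent, which is the bulk of the corollary.

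\medskip

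\noindent Second, I would establish the biconditional ``$\delta$ is Woodin iff $\langle V_\delta, V_{\delta+1}; \in \rangle \models$ Ord is Woodin.'' Unwinding the definitions, the structure satisfies ``Ord is Woodin'' exactly when for every $C \subset V_\delta$ there is a cardinal $\kappa < \delta$ that is $\beta$-$C$-strong \emph{in the sense of the structure} for all $\beta < \delta$. The task is to match internal $\beta$-$C$-strongness with the external notion used in the definition of Woodin cardinal. Here the key point is that an internal elementary embedding $j : \langle V_\delta; \in\rangle \to \langle M; \in\rangle$ witnessing internal strongness, with $M$ an internal transitive ``class'' (so $M \subseteq V_\delta$), corresponds to an external embedding of the required form with agreement $j(C) \cap V_\beta = C \cap V_\beta$ up to $V_\beta$ for $\beta < \delta$; the standard fact that $\beta$-$C$-strongness for $\beta < \delta$ can be captured by extenders in $V_\delta$ (using inaccessibility of $\delta$ to ensure the relevant extenders and targets lie inside $V_\delta$) makes the internal and external witnesses interchangeable. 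This is the step I expect to be the main obstacle, since one must check that elementarity with respect to the full first-order language of $\langle V; \in\rangle$ externally matches elementarity as computed inside the structure, and that the target model $M$ and the embedding $j$ can be coded as a subset of $V_\delta$ precisely when $\delta$ is inaccessible.

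\medskip

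\noindent Finally, having reduced both the hypothesis ``$\delta$ is Woodin'' and the category-theoretic statements to their internal counterparts, I would simply cite Theorem~\ref{thm:main} applied in $\langle V_\delta, V_{\delta+1}; \in \rangle$ to conclude \eqref{item:delta-is-Woodin} $\Leftrightarrow$ \eqref{item:SWVP-below-delta} $\Leftrightarrow$ \eqref{item:WVP-below-delta}. The cardinality restriction ``each graph $G_\alpha$ has cardinality less than $\delta$'' in clauses \eqref{item:SWVP-below-delta} and \eqref{item:WVP-below-delta} is exactly what forces the graphs to be genuine ``sets'' of the structure, so it is not an additional hypothesis to be finessed but rather the faithful external rendering of the internal quantification; I would make this correspondence explicit to justify that no generality is lost or gained in passing between the two viewpoints.
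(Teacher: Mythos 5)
Your proposal is correct and follows essentially the same route as the paper, which derives the corollary by observing that $\langle V_\delta, V_{\delta+1}; \in\rangle$ models GB $+$ AC, that $\delta$ is Woodin iff this structure satisfies ``Ord is Woodin,'' and then applying Theorem~\ref{thm:main} inside the structure. The paper leaves the internal/external matching (including the coding of witnessing embeddings by extenders in $V_\delta$) as ``not hard to see,'' so your more explicit verification of that dictionary is a faithful elaboration rather than a different argument.
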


\section{WVP implies Ord is Woodin}\label{section:WVP-implies-Woodin}

Assume GB + AC. For a set $X$ we write $X^{\mathord{<}\omega}$ for the set of all finite-length sequences of elements of $X$, and for $k<\omega$ we write $X^k$ for the set of all $k$-length sequences of elements of $X$.  We use the symbol $\powerset$ for the power set operation.

\begin{defn}
 For a set $X$ and natural numbers $j$, $k$, and $i_1,\ldots,i_j$ such that $1 \le i_1,\ldots,i_j \le k$, we define the function $\Proj_{k, \langle i_1,\ldots, i_j \rangle} :  X^k \to X^j$ by
 \begin{align*}
  & \Proj_{k, \langle i_1,\ldots, i_j \rangle}(\langle x_1, \ldots, x_k \rangle) = \langle x_{i_1}, \ldots, x_{i_j} \rangle.
 \end{align*}
 (The notation $\Proj$ is intended to indicate a vector of coordinate projections.)
\end{defn}

The following structures are designed in such a way that homomorphisms between them will correspond to elementary embeddings with domain $\langle V ; \in\rangle$.

\begin{defn}
 A \emph{$\mathscr{P}$-structure} is a structure
 \[\mathscr{P}_X = \big\langle \powerset(X^{\mathord{<}\omega}); \mathord{\cap}, \mathord{-}, X^k, \WF, \Proj^{-1}_{k, \langle i_1,\ldots,i_j \rangle}, \BP_k \big\rangle_{j,k<\omega \text{ and } 1 \le i_i,\ldots,i_j \le k,}\]
 where $X$ is a transitive set, with the following operations and relations.
 \begin{enumerate}
  \item $\cap$ is the binary operation of intersection.
  \item $-$ is the unary operation of complementation.
  \item $X^k$ is a constant.
  \item $\WF$ is the unary relation on $\powerset(X^{\mathord{<}\omega})$ consisting of all sets $A \subset X^{\mathord{<}\omega}$ that are wellfounded under $\supsetneq$, the reverse of the proper initial segment relation.
 \item $\Proj^{-1}_{k, \langle i_1,\ldots,i_j \rangle}$ is the function $\powerset(X^j) \to \powerset(X^k)$, considered as a partial unary operation on $\powerset(X^{\mathord{<}\omega})$, that is the inverse image function of $\Proj_{k, \langle i_1,\ldots,i_j \rangle}$:
 \[ \Proj^{-1}_{k, \langle i_1,\ldots, i_j \rangle}(A) = \big\{\langle x_1, \ldots, x_k\rangle \in X^k : \langle x_{i_1}, \ldots, x_{i_j} \rangle  \in A\big\}.\]
 \item $\BP_k$ (for \emph{bounded projection}) is the function $\powerset(X^{k+1}) \to \powerset(X^{k+1})$, considered as a partial unary operation on $\powerset(X^{\mathord{<}\omega})$, defined by
 \[\BP_k(A) = \big\{ \langle x_1, \ldots, x_{k+1} \rangle \in X^{k+1}: \exists z \in x_{k+1}\; \langle x_1,\ldots, x_{k},z \rangle \in A \big\}. \]
 \end{enumerate}
\end{defn}

Our main results will only use the structures $\mathscr{P}_{V_\beta}$ for ordinals $\beta$, but we may as well allow $X$ to be an arbitrary transitive set in the definition. Note that for every set $A \subset X^{\mathord{<}\omega}$, the following statements are equivalent by DC:
\begin{itemize}
 \item $\WF(A)$ fails.
 \item There is an infinite chain $a_1 \subsetneq a_2 \subsetneq a_3 \subsetneq \cdots$ of elements of $A$.
 \item There is a sequence $f \in X^\omega$ such that $f\restriction n \in A$ for infinitely many $n < \omega$.
\end{itemize}
The bounded projection operator $\BP_k$ is called ``bounded'' because of the bounded existential quantifier $\exists z \in x_{k+1}$ in the definition.
Note that these operators $\BP_k$ depend on the structure of $X$ as a material set, meaning $\langle X; \in \rangle$.

A \emph{homomorphism} of $\mathscr{P}$-structures is a homomorphism of structures in the usual sense: it is a function that preserves (commutes with) the operations and partial operations, and preserves the relation $\WF$, but does not necessarily preserve failure of $\WF$. Because all boolean operations are generated by $\cap$ and $-$, every homomorphism of $\mathscr{P}$-structures is a homomorphism of boolean algebras, and in particular preserves the subset relation $\subset$.

\begin{rem}
 In the usual terminology of extenders, the properties of preservation of $\WF$, $\Proj^{-1}$, and $\BP$ are called \emph{countable completeness}, \emph{coherence}, and \emph{normality} respectively.
\end{rem}

The homomorphisms given by the following lemma will be called \emph{trivial homomorphisms}. Although they carry no information, their existence will be crucial for our application because they will provide the ``backward'' homomorphisms in the definition of WVP.

\begin{lem}\label{lem:trivial-hom}
 For all transitive sets $X$ and $Y$ such that $Y \subset X$, the function $h : \powerset(X^{\mathord{<}\omega}) \to \powerset(Y^{\mathord{<}\omega})$ defined by $h(A) = A \cap Y^{\mathord{<}\omega}$ is a homomorphism from $\mathscr{P}_X$ to $\mathscr{P}_Y$.
\end{lem}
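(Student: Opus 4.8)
The plan is to verify directly that the function $h(A) = A \cap Y^{\mathord{<}\omega}$ commutes with each operation and partial operation of the $\mathscr{P}$-structures and preserves the relation $\WF$, handling the six kinds of symbols one at a time. Throughout I would use the two basic facts that follow from $Y \subseteq X$ being transitive: first, $Y^{\mathord{<}\omega} \subseteq X^{\mathord{<}\omega}$ and $Y^k = Y^{\mathord{<}\omega} \cap X^k$ for each $k$; and second---crucially---that if $a \in Y$ then every element of $a$ also lies in $Y$.

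For the Boolean structure, preservation of $\cap$ is the identity $(A \cap B) \cap Y^{\mathord{<}\omega} = (A \cap Y^{\mathord{<}\omega}) \cap (B \cap Y^{\mathord{<}\omega})$, and preservation of complementation reduces to $(X^{\mathord{<}\omega} \setminus A) \cap Y^{\mathord{<}\omega} = Y^{\mathord{<}\omega} \setminus (A \cap Y^{\mathord{<}\omega})$, both of which hold because $Y^{\mathord{<}\omega} \subseteq X^{\mathord{<}\omega}$. The constant $X^k$ is sent to $X^k \cap Y^{\mathord{<}\omega} = Y^k$, which is the corresponding constant of $\mathscr{P}_Y$. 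For the relation $\WF$, since $h(A) = A \cap Y^{\mathord{<}\omega} \subseteq A$ and any subset of a set that is wellfounded under $\supsetneq$ is again wellfounded under $\supsetneq$, we get that $\WF(A)$ implies $\WF(h(A))$; this is all that is required, because a homomorphism need only preserve $\WF$ and not its failure.

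For the partial operations I would use the convention that $h$ must send the domain into the domain and commute there. Given $A \subseteq X^j$, we have $h(A) = A \cap Y^j \subseteq Y^j$, so $\Proj^{-1}_{k,\langle i_1,\ldots,i_j\rangle}$ is defined at $h(A)$ in $\mathscr{P}_Y$, and commutation reduces to the observation that a tuple $\langle y_1,\ldots,y_k\rangle \in Y^k$ has image $\langle y_{i_1},\ldots,y_{i_j}\rangle$ automatically in $Y^j$, so membership in $A$ and in $A \cap Y^j$ coincide for such images. The operation $\BP_k$ is the one genuine point of the proof and the only place transitivity of $Y$ is needed: for $A \subseteq X^{k+1}$ we must check that $\BP_k(A) \cap Y^{k+1}$ equals the value of $\BP_k$ computed in $\mathscr{P}_Y$ at $A \cap Y^{k+1}$, and the issue is whether the bounded witness $z \in x_{k+1}$ in the definition of $\BP_k$ can be taken within $Y$. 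But when $\langle y_1,\ldots,y_{k+1}\rangle \in Y^{k+1}$ and $z \in y_{k+1}$, transitivity gives $z \in Y$, hence $\langle y_1,\ldots,y_k,z\rangle \in Y^{k+1}$; therefore the condition $\langle y_1,\ldots,y_k,z\rangle \in A$ is equivalent to $\langle y_1,\ldots,y_k,z\rangle \in A \cap Y^{k+1}$, and the two bounded existential quantifiers have exactly the same witnesses. I expect this verification for $\BP_k$---making sure the bounded quantifier stays inside $Y$---to be the only step requiring any care, since all the others are formal set-theoretic identities.
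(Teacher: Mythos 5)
Your proof is correct and follows essentially the same route as the paper's: the Boolean operations, constants, $\WF$, and $\Proj^{-1}$ are handled by the same formal observations, and the one substantive step---using transitivity of $Y$ to keep the bounded witness $z \in y_{k+1}$ inside $Y$ when verifying preservation of $\BP_k$---is exactly the point the paper singles out as well.
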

\begin{proof}
 Clearly $h$ is a boolean algebra homomorphism, $h(X^k) = Y^k$ for all $k < \omega$, and $h$ preserves the unary relation $\WF$. Preservation of the unary partial operation $\Proj^{-1}_{k, \langle i_1,\ldots,i_j \rangle}$ follows from closure of $Y^{\mathord{<}\omega}$ under the function $\Proj_{k, \langle i_1,\ldots,i_j\rangle}$. To verify preservation of the unary partial operation $\BP_k$ we must show that for all $A \subset X^{\mathord{<}\omega}$ and $y_1,\ldots, y_{k+1}\in Y$,
 \[   \langle y_1,\ldots, y_{k+1}\rangle \in \BP_k(A) \iff \langle y_1,\ldots, y_{k+1}\rangle \in \BP_k(A \cap Y^{\mathord{<}\omega}).\]
 In other words, we must show that
 \[  \exists z \in y_{k+1} \; \langle y_1,\ldots, y_k,z\rangle \in A \iff \exists z \in y_{k+1} \; \langle y_1,\ldots, y_k,z\rangle \in A \cap Y^{\mathord{<}\omega}.\]
 This follows from transitivity of $Y$: if $y_{k +1}\in Y$ and $z \in y_{k+1}$, then $z \in Y$.  (Here we rely on the ``boundedness'' of the bounded projection operator.)
\end{proof}

Nontrivial homomorphisms of $\mathscr{P}$-structures can be obtained from elementary embeddings of the set-theoretic universe $V$ by the usual ``derived extender'' method:

\begin{lem}\label{lem:derived-extender}
 Let $j : \langle V; \in \rangle \to \langle M; \in \rangle$ be an elementary embedding for some transitive class $M$. For all transitive sets $X$ and $Y$ such that $Y \subset j(X)$, the function $h : \powerset(X^{\mathord{<}\omega}) \to \powerset(Y^{\mathord{<}\omega})$ defined by $h(A) = j(A) \cap Y^{\mathord{<}\omega}$ is a homomorphism from $\mathscr{P}_X$ to $\mathscr{P}_Y$.
\end{lem}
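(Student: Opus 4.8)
The plan is to factor $h$ as a composition of two homomorphisms, using Lemma~\ref{lem:trivial-hom} for the second factor. First observe that since $X$ is transitive and $M$ is a transitive class, the set $j(X)$ is transitive by elementarity of $j$; as $Y \subset j(X)$ and $Y$ is transitive, Lemma~\ref{lem:trivial-hom} provides a trivial homomorphism $h_0 : \mathscr{P}_{j(X)} \to \mathscr{P}_Y$ given by $h_0(B) = B \cap Y^{\mathord{<}\omega}$. On the other hand, let $e = j \restriction \powerset(X^{\mathord{<}\omega})$, so $e(A) = j(A)$. Because $M$ is transitive we have $j(A) \subset (j(X))^{\mathord{<}\omega}$, so $j(A)$ genuinely lies in the domain $\powerset((j(X))^{\mathord{<}\omega})$ of $\mathscr{P}_{j(X)}$, and
\[ h(A) = j(A) \cap Y^{\mathord{<}\omega} = h_0(e(A)). \]
Thus $h = h_0 \circ e$, and it suffices to show that $e$ is a homomorphism $\mathscr{P}_X \to \mathscr{P}_{j(X)}$; then $h$ is a composition of homomorphisms.

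To see that $e$ is a homomorphism, the key point is that each operation, constant, and the relation $\WF$ of a $\mathscr{P}$-structure is given by a formula whose interpretation is absolute between $M$ and $V$ for the sets involved. By elementarity, $j$ carries $\mathscr{P}_X$ to the structure that $M$ computes as $\mathscr{P}_{j(X)}$, and by absoluteness this $M$-computed structure agrees with the genuine structure $\mathscr{P}_{j(X)}$ on all relevant elements. Concretely, $j(A \cap B) = j(A) \cap j(B)$, the map $j$ commutes with complementation relative to $(j(X))^{\mathord{<}\omega}$, we have $j(X^k) = (j(X))^k$, and $j$ commutes with $\Proj^{-1}_{k, \langle i_1,\ldots,i_j \rangle}$ and with $\BP_k$ because these operations are defined by formulas using only quantifiers bounded to $X^k$ or to the entries of a tuple, and such formulas are absolute. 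For the relation $\WF$ we only need preservation in the forward direction: if $A$ is wellfounded under $\supsetneq$, then $M$ satisfies that $j(A)$ is wellfounded under $\supsetneq$, and since wellfoundedness of a set relation is absolute to transitive models, $j(A)$ is genuinely wellfounded, i.e.\ $\WF(j(A))$ holds.

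I expect the only step requiring genuine care, beyond routine appeals to elementarity, to be the treatment of $\WF$: wellfoundedness is not first-order, so its preservation relies on the downward absoluteness of wellfoundedness from the transitive class $M$; as with any homomorphism of $\mathscr{P}$-structures, we neither need nor obtain preservation of the \emph{failure} of $\WF$. Once $e$ is seen to be a homomorphism, the conclusion is immediate from Lemma~\ref{lem:trivial-hom}. (Alternatively, one could verify the homomorphism clauses for $h$ directly; the computations are identical to those in the proof of Lemma~\ref{lem:trivial-hom}, except that each set $A$ is first replaced by $j(A)$ via elementarity. In particular the preservation of $\Proj^{-1}$ and of $\BP_k$ again hinges on the fact that the relevant tuples have all their entries in the transitive set $Y$, so that intersecting with $Y^{\mathord{<}\omega}$ does not affect membership.)
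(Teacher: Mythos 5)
Your proposal is correct and follows essentially the same route as the paper: factor $h$ as the trivial homomorphism $\mathscr{P}_{j(X)} \to \mathscr{P}_Y$ of Lemma~\ref{lem:trivial-hom} composed with $j \restriction \powerset(X^{\mathord{<}\omega})$, the latter being a homomorphism into $\mathscr{P}_{j(X)}$ by elementarity together with the absoluteness of the operations and of $\WF$ between the transitive class $M$ and $V$. (Only a terminological quibble: the direction of $\WF$-preservation you need is \emph{upward} absoluteness from $M$ to $V$, which the paper justifies by noting that a rank function in $M$ certifies wellfoundedness in $V$.)
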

\begin{proof}
 Because $j$ is elementary and the $\mathscr{P}$-structure $\mathscr{P}_X$ is uniformly definable from $X$, the restriction $j \restriction \mathcal{P}(X^{\mathord{<}\omega})$ is a homomorphism from $\mathscr{P}_X$ to $(\mathscr{P}_{j(X)})^M$.  Here $(\mathscr{P}_{j(X)})^M$ means the structure $\mathscr{P}_{j(X)}$ as it is defined in $M$.  Note that $j(X)$ is a transitive set, $(j(X)^{\mathord{<}\omega})^M = j(X)^{\mathord{<}\omega}$, and $\powerset(j(X)^{\mathord{<}\omega})^M \subset \powerset(j(X)^{\mathord{<}\omega})$ where the power set is computed in $M$.
 
 It follows that $(\mathscr{P}_{j(X)})^M$ is a substructure of $\mathscr{P}_{j(X)}$ because the definitions of the operations and relations are easily seen to be absolute for transitive models of ZFC. (In particular, every instance of the $\WF$ relation that holds in $M$ holds also in $V$, because it can be ``certified'' by an ordinal-valued rank function in $M$ and the ordinals of $M$ are the true ordinals.)  By composing $j \restriction \mathcal{P}(X^{\mathord{<}\omega})$ with the inclusion homomorphism from $(\mathscr{P}_{j(X)})^M$ to $\mathscr{P}_{j(X)}$, we may therefore consider it as a homomorphism from $\mathscr{P}_X$ to $\mathscr{P}_{j(X)}$.
 
 The function $h$ is the composition of the homomorphism $j \restriction \mathcal{P}(X^{\mathord{<}\omega}) : \mathscr{P}_X \to \mathscr{P}_{j(X)}$ with the trivial homomorphism $\mathscr{P}_{j(X)} \to \mathscr{P}_Y$ that exists by Lemma \ref{lem:trivial-hom} because $Y \subset j(X)$, so it is a homomorphism.
\end{proof}

A homomorphism $h$ given by an elementary embedding $j : \langle V; \in \rangle \to \langle M; \in \rangle$ as in Lemma \ref{lem:derived-extender} is said to be \emph{derived from $j$}. We will need the following result, which states that every homomorphism of $\mathscr{P}$-structures can be realized as a derived homomorphism.  It will be proved in the next section, where we will build $M$ and $j$ from $h$ using a standard ``ultrapower'' (also called ``term model'') construction.

\begin{lem}\label{lem:every-extender-is-derived}
 Let $X$ and $Y$ be transitive sets and let $h : \mathscr{P}_X \to \mathscr{P}_Y$ be a homomorphism.  Then there is a transitive class $M$ and an elementary embedding $j : \langle V; \in \rangle \to \langle M; \in \rangle$ such that $Y \subset j(X)$ and
 $h(A) = j(A) \cap Y^{\mathord{<}\omega}$ for all $A \subset X^{\mathord{<}\omega}$.
\end{lem}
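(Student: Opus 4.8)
The plan is to reverse the derived-extender construction of Lemma~\ref{lem:derived-extender} by building $j$ and $M$ from $h$ via an ultrapower (term model) construction. First I would extract an extender from $h$: for each tuple $s\in Y^k$ set $E_s = \{A\subset X^k : s\in h(A)\}$. Because $h$ is a boolean homomorphism with $h(X^k)=Y^k$, each $E_s$ is an ultrafilter on $X^k$, and preservation of the partial operations $\Proj^{-1}_{k,\langle i_1,\ldots,i_j\rangle}$ makes the family $\langle E_s : s\in Y^{\mathord{<}\omega}\rangle$ cohere under coordinate projections. These two properties are exactly what is needed to run the term-model construction.

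Next I would form the term model $D$ whose elements are pairs $\langle s,f\rangle$ with $s\in Y^k$ and $f\colon X^k\to V$ a function in $V$, comparing two representatives by passing to the concatenated tuple $s\conc t$ together with the induced functions on the longer product, and defining the equivalence $\sim$ and the membership relation $E^*$ according to whether the corresponding sets lie in $E_{s\conc t}$. A routine induction on formulas then gives the \L o\'s theorem, whose existential step uses AC in $V$ to collect witnesses into a single function at the same tuple. Consequently the map $a\mapsto[\langle\langle\rangle,c_a\rangle]$, with $c_a$ the constant function at the empty tuple, is elementary from $\langle V;\in\rangle$ into $\langle D;E^*\rangle$.

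The main obstacle is wellfoundedness of $\langle D;E^*\rangle$, and this is precisely what the relation $\WF$ was designed to provide. Given a putative infinite $E^*$-descending sequence, I would thread the countably many finite index tuples into a single branch $s\in Y^\omega$, re-representing (using coherence) the $n$-th term as $\langle s\restriction m_n, f_n\rangle$ with $m_n$ strictly increasing, so that the membership conditions read $s\restriction m_{n+1}\in h(A_n)$, where $A_n$ encodes $f_{n+1}(\vec x)\in f_n(\vec x\restriction m_n)$. I would then assemble the set $A^*\subset X^{\mathord{<}\omega}$ of finite threads along which the $\in$-chain $f_0\ni f_1\ni\cdots$ is realized up to each level. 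Foundation in $V$ makes $\WF(A^*)$ hold, while preservation of $\Proj^{-1}$ and of finite intersections yields $s\restriction m_n\in h(A^*)$ for every $n$, so $s$ witnesses $\neg\WF(h(A^*))$, contradicting that $h$ preserves $\WF$. Hence $\langle D;E^*\rangle$ is wellfounded; letting $\pi\colon D\to M$ be its Mostowski collapse and composing, $j\colon\langle V;\in\rangle\to\langle M;\in\rangle$ is an elementary embedding into a transitive class.

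Finally I would analyze the generators to pin down $M$ and recover $h$. For $y\in Y$ set $b_y=[\langle\langle y\rangle,\mathrm{pr}\rangle]$ with $\mathrm{pr}(\langle x\rangle)=x$, and prove $\pi(b_y)=y$ by $\in$-induction on $y\in Y$ (using transitivity of $Y$); the nontrivial inclusion, that every $E^*$-predecessor of $b_y$ is equivalent to some $b_z$ with $z\in y$, is exactly where normality, i.e.\ preservation of $\BP_k$, is used, by writing the relevant set as a bounded projection of the graph of a representing function. It follows that each $y=\pi(b_y)\in j(X)$, so $Y\subset j(X)$. For $s=\langle y_1,\ldots,y_k\rangle\in Y^k$ the element $a_s=[\langle s,\id_k\rangle]$, where $\id_k(\vec x)=\vec x$, collapses to the tuple $\langle y_1,\ldots,y_k\rangle$, and for any $A\subset X^{\mathord{<}\omega}$ the computation $s\in j(A)\iff a_s\mathrel{E^*}[\langle\langle\rangle,c_A\rangle]\iff A\cap X^k\in E_s\iff s\in h(A)$ yields $j(A)\cap Y^{\mathord{<}\omega}=h(A)$, as required.
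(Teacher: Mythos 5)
Your construction is essentially the paper's proof: a term model whose elements are pairs consisting of a tuple in $Y^{\mathord{<}\omega}$ and a function on a finite power of $X$, a \L o\'{s} theorem proved by induction on formulas using preservation of the boolean operations and of $\Proj^{-1}$ plus AC at the existential step, wellfoundedness extracted from preservation of $\WF$ exactly as you describe, and the identification of each $y \in Y$ with the class of $\langle\langle y\rangle,\mathrm{pr}\rangle$ by $\in$-induction on $Y$ using preservation of $\BP_k$. Repackaging $h$ as the coherent ultrafilter family $E_s=\{A\subset X^k: s\in h(A)\}$ is only a change of notation, since ``$s\in h(A)$'' is precisely how the paper defines its relations $\in^*$ and $=^*$, and your closing computation recovering $h(A)=j(A)\cap Y^{\mathord{<}\omega}$ is a minor variant of the paper's.

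The one step you omit that is genuinely required is \emph{set-likeness} of the relation $E^*$ on the term model. The term model is a proper class (the representing functions $f\colon X^k\to V$ range over a proper class), and in GB the Mostowski collapse of a wellfounded extensional class relation is only available when every element has merely a set of predecessors; without that, the collapsing recursion is not justified, so ``letting $\pi\colon D\to M$ be its Mostowski collapse'' needs an argument. The paper proves set-likeness by showing that any $E^*$-predecessor of $[\langle s,f\rangle]$ represented by $\langle t,g\rangle$ is $=^*$-equivalent to $\langle t,\bar g\rangle$, where $\bar g$ agrees with $g$ wherever $g$ takes values in $\bigcup\ran(f)$ and equals $\emptyset$ elsewhere; the verification uses preservation of $\subset$ and of a suitable $\Proj^{-1}$ operator to pass from the concatenated tuple $b_1b_2$ to the diagonal tuple $b_1b_1$, and it leaves only a set of possible equivalence classes of predecessors. (A smaller bookkeeping point in the same vein: Scott's trick is needed to make the $\sim$-classes sets.) With that claim supplied, your outline matches the paper's proof step for step.
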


The above definitions and results about $\mathscr{P}$-structures would suffice to obtain a strong cardinal (meaning a $C$-strong cardinal for $C = \emptyset$) from WVP, but to obtain a $C$-strong cardinal for an arbitrary class $C$ we will need a slight addition to the notion of $\mathscr{P}$-structure:
 
\begin{defn}
 A \emph{pointed $\mathscr{P}$-structure} is a $\mathscr{P}$-structure with an additional constant:
 \[\mathscr{P}_{X,c} = \big\langle \powerset(X^{\mathord{<}\omega}); \mathord{\cap}, \mathord{-}, X^k, \WF, \Proj^{-1}_{k, \langle i_1,\ldots,i_j \rangle}, \BP_k, c^{\mathord{<}\omega}\big\rangle_{j,k<\omega \text{ and } 1 \le i_i,\ldots,i_j \le k}\]
 where $X$ is a transitive set and $c \subset X$.
\end{defn}

The notion of homomorphism for pointed $\mathscr{P}$-structures is defined in the usual way, so a homomorphism $\mathscr{P}_{X,c} \to \mathscr{P}_{Y,d}$ is a homomorphism  $h: \mathscr{P}_{X} \to \mathscr{P}_{Y}$ such that $h(c^{\mathord{<}\omega}) = d^{\mathord{<}\omega}$. In particular, for a class $C$, a homomorphism $\mathscr{P}_{X,C \cap X} \to \mathscr{P}_{Y, C\cap Y}$ is a homomorphism  $h: \mathscr{P}_{X} \to \mathscr{P}_{Y}$ such that $h((C\cap X)^{\mathord{<}\omega}) = (C \cap Y)^{\mathord{<}\omega}$.

We may now complete the proof of Theorem \ref{thm:main} by showing that if WVP holds, then Ord is Woodin. Assume that Ord is not Woodin, meaning that for some class $C$ there is no $C$-strong cardinal. Define the class of ordinals
\[ S = \{ \beta \in \Ord : \forall \kappa < \beta \,(\text{$\kappa$ is not $\beta$-$C$-strong})\}.\]

\begin{claim}
 $S$ is a proper class.
\end{claim}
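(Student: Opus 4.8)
The plan is to show that $S$ is unbounded in $\Ord$, which for a class of ordinals is the same as being a proper class. I would present $S$ as the class of closure points of a single ordinal-valued function built from the failure of $C$-strongness, and then invoke the standard fact that such a class is unbounded.

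First I would exploit the downward monotonicity of $\beta$-$C$-strongness noted just after Definition~\ref{defn:weakly-beta-C-strong}: for a fixed cardinal $\kappa$, the class of ordinals $\beta$ for which $\kappa$ is $\beta$-$C$-strong is downward closed, hence an initial segment of $\Ord$. Since we are assuming there is no $C$-strong cardinal, this initial segment is a proper set for every cardinal $\kappa$, so I may define $f(\kappa)$ to be the least ordinal $\beta$ such that $\kappa$ is not $\beta$-$C$-strong. By monotonicity, $\kappa$ is $\beta$-$C$-strong if and only if $\beta < f(\kappa)$, equivalently $\kappa$ fails to be $\beta$-$C$-strong if and only if $f(\kappa) \le \beta$. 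Because condition~\eqref{item:crit} forces any $\beta$-$C$-strong $\kappa$ to be measurable and hence a cardinal, the non-cardinals below $\beta$ contribute nothing to the definition of $S$, so that definition unwinds to: $\beta \in S$ if and only if $f(\kappa) \le \beta$ for every cardinal $\kappa < \beta$. Thus $S$ is exactly the class of ordinals $\beta$ closed under $f$ in the sense that $f$ sends every cardinal below $\beta$ to an ordinal $\le \beta$.

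Next I would establish unboundedness by the usual closure-point construction. Given an arbitrary ordinal $\gamma_0$, define recursively $\gamma_{n+1} = \sup\big(\{\gamma_n+1\} \cup \{ f(\kappa) : \kappa < \gamma_n \text{ a cardinal}\}\big)$ and set $\beta = \sup_{n<\omega}\gamma_n$. The sequence is strictly increasing, so $\beta > \gamma_0$, and for any cardinal $\kappa < \beta$ we have $\kappa < \gamma_n$ for some $n$, whence $f(\kappa) \le \gamma_{n+1} \le \beta$; therefore $\beta \in S$. As $\gamma_0$ was arbitrary, $S$ is unbounded and hence a proper class.

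I expect no substantive obstacle here, only two bookkeeping points to handle cleanly. The first is that each supremum in the recursion is legitimate: $\{f(\kappa) : \kappa < \gamma_n\}$ is the image of a set of cardinals under the class function $f$, so it is a set by the Replacement schema of GB, making $\gamma_{n+1}$ a genuine ordinal. The second is the $\le$-versus-$<$ boundary, namely that the limit $\beta$ is truly a closure point and not merely an approximation to one; this is automatic, since every relevant value $f(\kappa)$ with $\kappa < \beta$ is already absorbed at a finite stage of the construction. There is no large-cardinal content in this claim: it merely records that the failure levels $f(\kappa)$ cannot keep up with arbitrarily large $\beta$, so that unboundedly many $\beta$ dominate all witnesses below them.
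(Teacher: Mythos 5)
Your proof is correct and takes essentially the same route as the paper: you define the same function $f(\kappa)$ (the least $\beta$ at which $\kappa$ fails to be $\beta$-$C$-strong) and observe that $S$ contains the closure points of $f$, which form an unbounded class. The only difference is that you unpack the standard closure-point construction and the reduction to cardinals explicitly, where the paper simply cites the fact that the closure points of an ordinal function form a club.
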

\begin{proof}
 For every ordinal $\kappa$, we may define $f(\kappa)$ to be the least ordinal $\beta$ such that $\kappa$ is not $\beta$-$C$-strong, which exists because $\kappa$ is not $C$-strong.  (If $\kappa$ is not a measurable cardinal, then $\beta = 0$ works here.) Then $S$ contains the class of all closure points of the function $f : \Ord \to \Ord$, which is a closed unbounded class of ordinals.
\end{proof}

We may therefore enumerate $S$ in strictly increasing order as $S = \{ \beta(\xi) : \xi \in \Ord\}$ and define an Ord-sequence of pointed $\mathscr{P}$-structures $\big\langle \mathscr{P}_{V_{\beta(\xi)}, C \cap V_{\beta(\xi)}} : \xi \in \Ord\big\rangle.$ As mentioned in the introduction, WVP equivalently applies to more general structures such as pointed $\mathscr{P}$-structures instead of graphs, so it remains to prove the following claim.

\begin{claim}
 $\big\langle \mathscr{P}_{V_{\beta(\xi)}, C \cap V_{\beta(\xi)}} : \xi \in \Ord\big\rangle$ is a counterexample to WVP.
\end{claim}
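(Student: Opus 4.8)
The plan is to establish the two clauses in the definition of WVP for the sequence $\big\langle \mathscr{P}_{V_{\beta(\xi)},\, C\cap V_{\beta(\xi)}} : \xi \in \Ord \big\rangle$: that for $\xi \le \xi'$ there is a \emph{unique} homomorphism $\mathscr{P}_{V_{\beta(\xi')},\, C\cap V_{\beta(\xi')}} \to \mathscr{P}_{V_{\beta(\xi)},\, C\cap V_{\beta(\xi)}}$, and that for $\xi < \xi'$ there is \emph{no} homomorphism in the opposite direction. Since $\xi \le \xi'$ gives $V_{\beta(\xi)} \subseteq V_{\beta(\xi')}$ and $\xi<\xi'$ gives $V_{\beta(\xi)} \subsetneq V_{\beta(\xi')}$, I would reduce both clauses to a single observation about an arbitrary pointed homomorphism $h : \mathscr{P}_{V_\beta,\,C\cap V_\beta} \to \mathscr{P}_{V_\gamma,\,C\cap V_\gamma}$ whose \emph{target} index satisfies $\gamma \in S$: namely, that $h$ must be the trivial homomorphism $A \mapsto A \cap V_\gamma^{\mathord{<}\omega}$ of Lemma \ref{lem:trivial-hom}.

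To prove this observation I would feed $h$ into Lemma \ref{lem:every-extender-is-derived}, obtaining a transitive class $M$ and an elementary embedding $j : \langle V;\in\rangle \to \langle M;\in\rangle$ with $V_\gamma \subseteq j(V_\beta)$ and $h(A) = j(A) \cap V_\gamma^{\mathord{<}\omega}$ for all $A \subseteq V_\beta^{\mathord{<}\omega}$. Applying this with $A = (C\cap V_\beta)^{\mathord{<}\omega}$ and using that $h$ respects the pointing, the equation $h\big((C\cap V_\beta)^{\mathord{<}\omega}\big) = (C\cap V_\gamma)^{\mathord{<}\omega}$ unwinds to $j(C\cap V_\beta) \cap V_\gamma = C\cap V_\gamma$. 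A short computation—applying $j$ to the identity $C\cap V_{\min(\beta,\gamma)} = (C\cap V_{\max(\beta,\gamma)})\cap V_{\min(\beta,\gamma)}$, intersecting with $V_\gamma$, and using $V_\gamma \subseteq j(V_\beta)$—upgrades this to the coherence condition $j(C\cap V_\gamma)\cap V_\gamma = C\cap V_\gamma$. Together with $V_\gamma \subseteq j(V_\beta) \subseteq M$, this shows that if $j$ were nontrivial with $\crit(j) < \gamma$, then $\crit(j)$ would be $\gamma$-$C$-strong in the sense of Definition \ref{defn:weakly-beta-C-strong}, contradicting $\gamma \in S$. Hence $j$ is either the identity or has $\crit(j) \ge \gamma$; in both cases $j$ fixes every set of rank less than $\gamma$, so it fixes every tuple in $V_\gamma^{\mathord{<}\omega}$, and elementarity then gives $j(A)\cap V_\gamma^{\mathord{<}\omega} = A \cap V_\gamma^{\mathord{<}\omega}$. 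Thus $h$ is the trivial homomorphism, as claimed.

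Granting the observation, both clauses follow quickly. For $\xi \le \xi'$ the target index is $\gamma = \beta(\xi) \in S$; the trivial homomorphism $A \mapsto A \cap V_{\beta(\xi)}^{\mathord{<}\omega}$ exists and respects the pointing by Lemma \ref{lem:trivial-hom} (using $V_{\beta(\xi)} \subseteq V_{\beta(\xi')}$), and the observation forces every homomorphism to equal it, yielding both existence and uniqueness. For $\xi < \xi'$, suppose toward a contradiction that a homomorphism $\mathscr{P}_{V_{\beta(\xi)},\,C\cap V_{\beta(\xi)}} \to \mathscr{P}_{V_{\beta(\xi')},\,C\cap V_{\beta(\xi')}}$ existed; here the target index is $\gamma = \beta(\xi') \in S$, so the observation forces it to be $A \mapsto A \cap V_{\beta(\xi')}^{\mathord{<}\omega} = A$, the inclusion map, which sends the constant $V_{\beta(\xi)}^{k}$ to itself rather than to $V_{\beta(\xi')}^{k}$ and therefore fails to preserve the constant $X^k$, a contradiction. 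I expect the main obstacle to be the coherence step (condition \eqref{item:coherence}): extracting $j(C\cap V_\gamma)\cap V_\gamma = C\cap V_\gamma$ from the weaker pointed data $j(C\cap V_\beta)\cap V_\gamma = C\cap V_\gamma$, which is precisely where the inclusion $V_\gamma \subseteq j(V_\beta)$ furnished by Lemma \ref{lem:every-extender-is-derived} is indispensable, together with the care needed to locate $\crit(j)$ relative to $\gamma$ so that membership in $S$ is genuinely contradicted.
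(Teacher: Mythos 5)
Your proof is correct and takes essentially the same route as the paper: both feed the homomorphism into Lemma \ref{lem:every-extender-is-derived}, use the pointed constant to upgrade $j(C\cap V_\beta)\cap V_\gamma = C\cap V_\gamma$ to the coherence condition $j(C\cap V_\gamma)\cap V_\gamma = C\cap V_\gamma$, and contradict membership in $S$ via the critical point. The only difference is packaging: you verify the two WVP clauses directly through an ``every homomorphism into an $S$-indexed target is trivial'' observation (handling the forward case by failure of constant preservation), whereas the paper folds both failure modes into a single ``nontrivial homomorphism'' and derives one contradiction.
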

\begin{proof}
 If not, then because we have trivial ``backward'' homomorphisms given by Lemma \ref{lem:trivial-hom}, there must also be some nontrivial homomorphism \[h : \mathscr{P}_{V_{\alpha}, C \cap V_{\alpha}} \to \mathscr{P}_{V_{\beta}, C \cap V_{\beta}}\] for some ordinals $\alpha, \beta \in S$. By ``nontrivial'' we mean that either $\alpha < \beta$, or else $\alpha \ge \beta$ and $h(A) \ne A \cap V_\beta^{\mathord{<}\omega}$ for some set $A \subset V_\alpha^{\mathord{<}\omega}$.

 Considering $h$ as a homomorphism from $\mathscr{P}_{V_{\alpha}}$ to $\mathscr{P}_{V_\beta}$, by Lemma \ref{lem:every-extender-is-derived} there is a transitive class $M$ and an elementary embedding $j : \langle V; \in \rangle \to \langle M; \in \rangle$ such that $h$ is derived from $j$, meaning $V_\beta \subset j(V_{\alpha})$ and $h(A) = j(A) \cap V_\beta^{\mathord{<}\omega}$ for all $A \subset V_{\alpha}^{\mathord{<}\omega}$. Moreover, because $h$ is a homomorphism of pointed $\mathscr{P}$-structures it preserves the additional constant for $C$, meaning $h((C \cap V_\alpha)^{\mathord{<}\omega}) = (C \cap V_\beta)^{\mathord{<}\omega}$. It follows that $j(C \cap V_\alpha) \cap V_\beta = C \cap V_\beta$.
 
 We will obtain a contradiction to $\beta \in S$ by showing that $j$ witnesses the definition of $\beta$-$C$-strongness for some $\kappa < \beta$.
 
 First, note that $j$ moves some element of $V_\beta$. If $\alpha < \beta$ then this follows from the fact that $V_\beta \subset j(V_{\alpha})$. If $\alpha \ge \beta$ then this follows from the nontriviality assumption that $h(A) \ne A \cap V_\beta^{\mathord{<}\omega}$ for some set $A \subset V_\alpha^{\mathord{<}\omega}$, which implies that $j(A) \cap V_\beta^{\mathord{<}\omega} \ne A \cap V_\beta^{\mathord{<}\omega}$, so $j$ moves some element of $V_\beta^{\mathord{<}\omega}$. Therefore $\crit(j)$ exists and is less than $\beta$, so we may define $\kappa = \crit(j)$. 
 
 Second, note that the condition $V_\beta \subset j(V_{\alpha})$ implies $V_\beta \subset M$ because $j(V_{\alpha}) \in M$ and $M$ is transitive. Third and finally, note that the $C$-preservation condition $j(C \cap V_\alpha) \cap V_\beta = C \cap V_\beta$ implies $j(C \cap V_\beta) \cap V_\beta = C \cap V_\beta$ because $j(\alpha) \ge \beta$.  Therefore $\kappa$ is $\beta$-$C$-strong, which is a contradiction.
\end{proof}

\section{Proof of Lemma \ref{lem:every-extender-is-derived}}\label{section:every-extender-is-derived}

Let $X$ and $Y$ be transitive sets and let $h : \mathscr{P}_X \to \mathscr{P}_Y$ be a homomorphism. We want to show there is a transitive class $M$ and an elementary embedding $j : \langle V; \in \rangle \to \langle M; \in \rangle$ such that the homomorphism $\mathscr{P}_X \to \mathscr{P}_Y$ derived from $j$ is equal to $h$, meaning that $Y \subset j(X)$ and 
 $h(A) = j(A) \cap Y^{\mathord{<}\omega}$ for all $A \subset X^{\mathord{<}\omega}$.

Our structure $\langle M; \in\rangle$ will be obtained by a standard ``ultrapower'' (also called ``term model'') construction, similar to Neeman \cite{NeeMitchellOrder} or Zeman \cite{ZemInnerModels}. As a first approximation, we build a structure $\langle M^*; \in^*, =^*\rangle$, in which the symbols $\in$ and $=$ are interpreted as binary relations $\in^*$ and $=^*$ respectively, rather than true membership and equality.

\begin{defn} $M^* = \{ \langle k, b, f \rangle : k < \omega \text{ and } b \in Y^k \text{ and } f : X^k \to V\}$.

The binary relations $\in^*$ and $=^*$ on $M^*$ are defined by
  \begin{align*}
  \langle k_1, b_1, f_1 \rangle \in^* \langle k_2, b_2, f_2 \rangle &\iff b_1 b_2 \in h\big(\big\{a_1 a_2 : f_1(a_1) \in f_2(a_2)\big\}\big),\\
  \langle k_1, b_1, f_1 \rangle =^* \langle k_2, b_2, f_2 \rangle &\iff b_1 b_2 \in h\big(\big\{a_1 a_2 : f_1(a_1) = f_2(a_2)\big\}\big),
 \end{align*}
where juxtaposition (as in $a_1a_2$ and $b_1b_2$) denotes concatenation of finite sequences. 
\end{defn}

Note that in definitions of sets like $\{a_1 a_2 : f_1(a_1) \in f_2(a_2)\}$, we implicitly assume the condition $a_i \in X^{k_i}$ that is required to make sense of the expression $f_i(a_i)$.

Next we prove a version of \L o\'{s}'s theorem for the structure $\langle M^*; \in^*, =^*\rangle$.  Here ``formula'' means a formula in the first order language with two binary relation symbols $\in$ and $=$.
 
\begin{claim}\label{claim:Los}
 For every formula $\varphi$ and every $n<\omega$ sufficiently large that all free variables of $\varphi$ are contained in the set $\{v_1,\ldots,v_n\}$, the following statements are equivalent for all elements $\langle k_1,b_1,f_1 \rangle,\ldots,\langle k_n,b_n,f_n\rangle$ of $M^*$:
 \begin{enumerate}
  \item $\langle M^*; \in^*, =^* \rangle \models \varphi\big[ \langle k_1,b_1,f_1\rangle, \ldots, \langle k_n,b_n,f_n \rangle\big]$.
  \item $b_1 \cdots b_n \in h\big(\big\{ a_1 \cdots a_n :  \varphi[f_1(a_1),\ldots,f_n(a_n)]\big\}\big)$.\footnote{Here by $\varphi[f_1(a_1),\ldots,f_n(a_n)]$ we mean $\langle V; \in, =\rangle \models \varphi[f_1(a_1),\ldots,f_n(a_n)]$.}
 \end{enumerate}
 Moreover, there is an elementary embedding
 \[j^* : \langle V; \in , =\rangle \to \langle M^*; \in^*, =^*\rangle\] defined by
 $ j^*(p) = \langle 0, \Diamond, c_p \rangle $
 where $\Diamond$ is the empty sequence and for every $p \in V$ the constant function $c_p : \{\Diamond\} \to V$ is defined by $c_p(\Diamond) = p$.  
\end{claim}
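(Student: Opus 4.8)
The plan is to prove the \L o\'s theorem by induction on the complexity of the formula $\varphi$, and then derive the elementary embedding $j^*$ as a corollary by applying the theorem to atomic and compound formulas involving the constant functions $c_p$. The base case handles atomic formulas $v_i \in v_\ell$ and $v_i = v_\ell$: here the equivalence of (1) and (2) is almost immediate from the definitions of $\in^*$ and $=^*$, once one accounts for the concatenation bookkeeping. The only subtlety is that the defining clauses mention only two blocks $b_1 b_2$ and $a_1 a_2$, whereas statement (2) refers to the full concatenation $b_1 \cdots b_n$ over all $n$ variables; bridging this gap requires the coherence operations $\Proj^{-1}_{k,\langle i_1,\ldots,i_j\rangle}$, which let us pad, permute, and duplicate coordinates so that a set defined using two specified variables can be pulled back to a set of full-length tuples $a_1 \cdots a_n$. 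Since $h$ preserves $\Proj^{-1}$, this padding commutes with $h$, yielding the atomic case.

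Next I would handle the induction steps. Propositional connectives are straightforward: negation uses that $h$ is a boolean homomorphism (so it commutes with complementation $-$), and conjunction uses that it commutes with $\cap$; the defining set for $\neg\varphi$ is the complement of the defining set for $\varphi$ within $X^n$, and the defining set for $\varphi \wedge \psi$ is the intersection, so preservation of $-$, $\cap$, and the constant $X^n$ gives the result. The existential quantifier is the crucial and most delicate step. To evaluate $\exists v_{n+1}\,\varphi$, I would relate the defining set for $\exists v_{n+1}\,\varphi$ to an unbounded projection of the defining set for $\varphi$, and the matching structural operation on the $\mathscr{P}$-side is the bounded projection $\BP_n$. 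The key point is that in the term model an existential witness for a tuple $\langle k_1,b_1,f_1\rangle,\ldots$ can always be taken of the form $\langle k, b, g \rangle$ where $g$ ranges over arbitrary functions $X^k \to V$; the ``bound'' in $\BP_k$, which restricts the witness $z$ to lie in the last coordinate $x_{k+1}$, is reconciled with the ``unbounded'' existential by using the transitivity of $X$ together with the freedom to choose the extra coordinate so that it captures any potential witness. This is where one must argue carefully, probably using a Skolem-type witness function built from the sets over which the projection ranges.

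The hardest part will be this existential step, specifically verifying that the bounded projection operator $\BP_k$ faithfully captures the existential quantifier despite its boundedness: one direction (if a witness exists in $M^*$ then the projected set is hit) is routine, but the converse—manufacturing a genuine element $\langle k,b,g\rangle \in M^*$ witnessing the existential from membership in $h(\BP_k(\cdots))$—requires choosing the witness function $g$ correctly and using that $h$ preserves $\BP_k$ together with the $\Proj^{-1}$ machinery to align coordinate arities. I expect to need the explicit definition of $\BP_k$, namely $\BP_k(A) = \{\langle x_1,\ldots,x_{k+1}\rangle : \exists z \in x_{k+1}\ \langle x_1,\ldots,x_k,z\rangle \in A\}$, combined with transitivity of $Y$ on the target side.

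Finally, granting the \L o\'s theorem, I would prove that $j^*$ is elementary. Since $j^*(p) = \langle 0, \Diamond, c_p\rangle$, plugging the tuples $\langle 0,\Diamond,c_{p_1}\rangle,\ldots,\langle 0,\Diamond,c_{p_n}\rangle$ into statement (2) reduces the membership condition to $\Diamond \in h(\{\Diamond : \varphi[p_1,\ldots,p_n]\})$, where the inner set is either $\{\Diamond\} = X^0$ or $\emptyset$ depending on whether $\varphi[p_1,\ldots,p_n]$ holds in $V$. Using that $h$ preserves the constant $X^0$ and (being a boolean homomorphism) sends $\emptyset$ to $\emptyset$, this says exactly that $\langle M^*;\in^*,=^*\rangle \models \varphi[j^*(p_1),\ldots,j^*(p_n)]$ if and only if $\langle V;\in,=\rangle \models \varphi[p_1,\ldots,p_n]$, which is elementarity. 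One should also check that $j^*$ respects $=^*$ in the sense that it is well-defined into the quotient, but since elementarity already covers atomic equality formulas this is subsumed by the same computation.
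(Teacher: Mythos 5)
Your base cases, propositional steps, and the derivation of elementarity of $j^*$ from the constant functions $c_p$ all match the paper's argument. The problem is the existential quantifier step, which you correctly identify as the crux but for which you propose the wrong tool. You plan to capture $\exists v_{n+1}\,\psi$ via the bounded projection $\BP_k$, reconciling its boundedness with the unbounded quantifier ``using the transitivity of $X$.'' This cannot work: by definition $\BP_k(A) = \{\langle x_1,\ldots,x_{k+1}\rangle : \exists z \in x_{k+1}\ \langle x_1,\ldots,x_k,z\rangle \in A\}$, so the witness $z$ is constrained to be an element of some $x_{k+1} \in X$, hence (by transitivity) an element of $X$. But the existential quantifier in $\langle M^*;\in^*,=^*\rangle$ ranges over arbitrary triples $\langle k_3,b_3,f_3\rangle$ with $f_3 : X^{k_3} \to V$ taking values anywhere in $V$, not in $X$. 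No amount of coordinate bookkeeping turns a quantifier bounded inside $X$ into a quantifier over all of $V$.

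The paper's existential step uses no $\BP$ at all. Instead, given $b_1\cdots b_n \in h(\{a_1\cdots a_n : \exists p\,\psi[f_1(a_1),\ldots,f_n(a_n),p]\})$, it applies the Axiom of Choice to produce a single Skolem function $f_{n+1} : X^{k_1+\cdots+k_n} \to V$ choosing witnesses where they exist, takes $k_{n+1} = k_1+\cdots+k_n$ and $b_{n+1} = b_1\cdots b_n$, and uses the operator $\Proj^{-1}_{k_{n+1},\langle 1,\ldots,k_{n+1},1,\ldots,k_{n+1}\rangle}$ to duplicate coordinates so that the inductive hypothesis for $\psi$ applies to the tuple extended by $\langle k_{n+1}, b_{n+1}, f_{n+1}\rangle$; the forward direction needs only monotonicity and a $\Proj^{-1}$ to drop the extra block. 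You do gesture at ``a Skolem-type witness function'' in passing, which is the right idea, but the step as you have framed it---built around $\BP_k$ and transitivity of $Y$---would fail if carried out. The operators $\BP_k$ (normality) are genuinely needed, but only later in the paper's Section 3, to prove by $\in$-induction that each $y \in Y$ equals $\llbracket 1,\langle y\rangle,\pi\rrbracket$ and hence that $h$ is derived from $j$; they play no role in the \L o\'{s} claim itself, which uses only the boolean structure, the constants $X^k$, the $\Proj^{-1}$ operators, and AC in $V$.
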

\begin{proof}
 The first part is proved by induction on formulas. For the first base case, assume that $\varphi$ is the formula $v_i \in v_j$ and let $n \ge \max\{i,j\}$. Then by definition of $\in^*$ we have
 \begin{align*}
  &\langle M^* ; \in^*, =^* \rangle \models \varphi\big[\langle k_1,b_1,f_1\rangle,\ldots,\langle k_n,b_n,f_n\rangle\big]\\
  \iff
  &\langle k_i, b_i, f_i\rangle \in^* \langle k_j, b_j, f_j \rangle\\
  \iff
  &b_ib_j \in h\big(\big\{ a_ia_j : f_i(a_i) \in f_j(a_j)\big\}\big)\\
  \iff
  &b_1\cdots b_n \in h\big(\big\{ a_1\cdots a_n : f_i(a_i) \in f_j(a_j)\big\}\big) && (\Proj^{-1})\footnotemark\\
  \iff
  &b_1 \cdots b_n \in h\big(\big\{ a_1 \cdots a_n : \varphi[f_1(a_1),\ldots,f_n(a_n)]\big\}\big).
 \end{align*}
 \footnotetext{This notation means that we use the fact that $h$ preserves some $\Proj^{-1}$ operator in this step. More specifically, the operator $\Proj^{-1}_{k_1 + \cdots + k_n,
  \langle
   k_1 + \cdots + k_{i-1}+1, \ldots, k_1 + \cdots + k_i,
   k_1 + \cdots + k_{j-1}+1, \ldots, k_1 + \cdots + k_j
  \rangle}$.}The second base case (where $\varphi$ is the formula $v_i = v_j$) is similar, using the definition of $=^*$ instead of the definition of $\in^*$.
  
 Now assume that $\varphi$ is $\neg \psi$ where the claim holds for $\psi$.  Then letting $k = k_1 + \cdots + k_n$,
 \begin{align*}
  &\langle M^*; \in^*, =^* \rangle \models \varphi\big[ \langle k_1,b_1,f_1\rangle, \ldots, \langle k_n,b_n,f_n \rangle\big]\\
   \iff & \langle M^*; \in^*, =^* \rangle \not\models \psi\big[ \langle k_1,b_1,f_1\rangle, \ldots, \langle k_n,b_n,f_n \rangle\big]\\
   \iff & b_1 \cdots b_n \in Y^k \setminus h\big(\big\{ a_1 \cdots a_n :  \psi[f_1(a_1),\ldots,f_n(a_n)]\big\}\big)\\
   \iff & b_1 \cdots b_n \in h\big( X^k \setminus \big\{ a_1 \cdots a_n :  \psi[f_1(a_1),\ldots,f_n(a_n)]\big\}\big)\\
   \iff & b_1 \cdots b_n \in h\big(\big\{ a_1 \cdots a_n :  \varphi[f_1(a_1),\ldots,f_n(a_n)]\big\}\big),
 \end{align*}
 using preservation of boolean operations and the fact that $h(X^k) = Y^k$.

 If $\varphi$ is $\psi_1 \wedge \psi_2$ where the claim holds for $\psi_1$ and $\psi_2$, then
 the claim holds for $\varphi$ by an entirely straightforward argument using preservation of $\cap$.
 
 Finally, assume that $\varphi$ is obtained by existential quantification from a formula $\psi$ for which the claim holds. For simplicity of notation, we assume that $n = 2$ and $\varphi$ is $\exists v_3 \psi$.  (The general case can be proved similarly, with only notational complications.) Then we have
  \begin{align*}
  &\langle M^*; \in^*, =^*  \rangle \models \varphi\big[\langle k_1,b_1,f_1\rangle, \langle k_2,b_2,f_2\rangle \big]\\
  \implies
  &\langle M^*; \in^*, =^*  \rangle \models \psi\big[\langle k_1,b_1,f_1 \rangle, \langle k_2,b_2,f_2\rangle, \langle k_3,b_3,f_3\rangle \big] \text{ f.s.\ $\langle k_3,b_3,f_3\rangle \in M^*$}\\
  \implies
  & b_1 b_2 b_3 \in h\big(\big\{ a_1 a_2 a_3: \psi[f_1(a_1),f_2(a_2),f_3(a_3)]\big\}\big)\\
  \implies
  & b_1 b_2 b_3 \in h\big(\big\{ a_1 a_2 a_3: \varphi[f_1(a_1),f_2(a_2)]\big\}\big) && (\subset)\\
  \implies
  & b_1 b_2  \in h\big(\big\{ a_1 a_2: \varphi[f_1(a_1),f_2(a_2)]\big\}\big), && (\Proj^{-1})\footnotemark
 \end{align*}
 \footnotetext{More specifically, $\Proj^{-1}_{k_1 + k_2 + k_3, \langle 1,\ldots,k_1+k_2 \rangle}$.}and conversely, letting $k_3 = k_1 + k_2$ and $b_3 = b_1 b_2$, we have
 \begin{align*}
  &b_1 b_2 \in h\big(\big\{ a_1 a_2 : \varphi[f_1(a_1),f_2(a_2)]\big\}\big)\\
  \implies 
  &b_1 b_2 \in h\big(\big\{ a_1 a_2 : \psi[f_1(a_1),f_2(a_2),f_3(a_1a_2)]\big\}\big) \text{ f.s.\ $f_3 : X^{k_3} \to V$} && \text{(AC)}\\
  \implies 
  &b_1 b_2 b_1 b_2 \in h\big(\big\{ a_1 a_2 a_1 a_2: \psi[f_1(a_1),f_2(a_2),f_3(a_1a_2)]\big\}\big) && (\Proj^{-1})\footnotemark \\
  \implies 
  &b_1 b_2 b_3 \in h\big(\big\{ a_1 a_2 a_3: \psi[f_1(a_1),f_2(a_2),f_3(a_3)]\big\}\big) && (\subset)\\
  \implies
  &\langle M^*; \in^*, =^*  \rangle \models \psi\big[\langle k_1,b_1,f_1\rangle, \langle k_2,b_2,f_2\rangle, \langle k_3,b_3,f_3\rangle \big]\\
  \implies
  &\langle M^*; \in^*, =^*  \rangle \models \varphi\big[ \langle k_1,b_1,f_1\rangle, \langle k_2,b_2,f_2\rangle \big],
 \end{align*}
 where in the step labeled AC we use the Axiom of Choice to produce a function $f_3$ choosing witnesses for the existential quantifier whenever they exist.
 \footnotetext{More specifically, $\Proj^{-1}_{k_3, \langle 1,\ldots,k_3,1,\ldots,k_3\rangle}$.}
 
 For the ``moreover'' part, let $p_1,\ldots,p_n \in V$.  Then we have
 \begin{align*}
  &\langle M^*; \in^*, =^*\rangle \models \varphi[j^*(p_1),\ldots,j^*(p_n)]\\
   \iff
   & \langle M^*; \in^*, =^*\rangle \models \varphi\big[ \langle 0, \Diamond, c_{p_1}\rangle,\ldots, \langle 0, \Diamond, c_{p_n}\rangle\big]\\
   \iff
   & \Diamond \cdots \Diamond \in h\Big(\Big\{\Diamond \cdots \Diamond : \varphi \big[c_{p_1}(\Diamond),\ldots,c_{p_n}(\Diamond)\big]\Big\}\Big)\\
   \iff
   & \Diamond \in h\big(\big\{\Diamond : \varphi[p_1,\ldots,p_n]\big\}\big)\\
   \iff
   & \varphi[p_1,\ldots,p_n],
 \end{align*}
 because $h(\emptyset) = \emptyset$ and $h(\{\Diamond\}) = h(X^0) = Y^0 =  \{\Diamond\}$.
\end{proof}

The existence of the elementary embedding $j^*$ implies that the structure $\langle M^*; \in^*, =^* \rangle$ is elementarily equivalent to $\langle V; \in, = \rangle$. It follows that the relation $=^*$ is an equivalence relation and that the relation $\in^*$ is invariant under $=^*$. We may therefore define the quotient of the structure $\langle M^*; \in^*, =^* \rangle$ by the relation $=^*$ to obtain a structure that interprets the equality symbol as true equality:
 
\begin{defn}
 $\langle M'; \in', = \rangle$ is the quotient structure $\langle M^*; \in^*, =^* \rangle / =^*$.
\end{defn}

Then we have 
\[M'= \big\{ [k, b, f] : k < \omega \text{ and } b \in Y^k \text{ and } f: X^k \to V\big\},\]
where $[k,b,f]$ denotes the equivalence class\footnote{We only include representatives of minimal rank in order to ensure that $[k, b,f]$ is a set (Scott's trick).} of $\langle k, b, f \rangle$ under $=^*$ in $M^*$. A straightforward induction on formulas shows that the quotient function from $\langle M^*; \in^*,=^*\rangle$ to $\langle M'; \in', = \rangle$ given by $\langle k, b, f\rangle \mapsto [k, b, f]$ preserves the truth values of all formulas, so we may restate \L o\'{s}'s theorem (Claim \ref{claim:Los}) for the quotient structure $\langle M'; \in'\rangle$ as follows. (The equality symbol will henceforth always be interpreted as true equality. We will drop it from our notation for the structure, but it may still be used in formulas.)
 
\begin{claim}\label{claim:Los2}
 For every formula $\varphi$ and every $n<\omega$ sufficiently large that all free variables of $\varphi$ are contained in the set $\{v_1,\ldots,v_n\}$, the following statements are equivalent for all elements $[ k_1,b_1,f_1 ],\ldots,[ k_n,b_n,f_n]$ of $M'$:
 \begin{enumerate}
  \item $\langle M'; \in' \rangle \models \varphi\big[ [ k_1,b_1,f_1], \ldots, [ k_n,b_n,f_n ]\big]$.
  \item $b_1 \cdots b_n \in h\big(\big\{ a_1 \cdots a_n :  \varphi[f_1(a_1),\ldots,f_n(a_n)]\big\}\big)$.
 \end{enumerate}
 Moreover, there is an elementary embedding
 \[j' : \langle V; \in \rangle \to \langle M'; \in'\rangle\] defined by
 $ j'(p) = [ 0, \Diamond, c_p ] $
 where $\Diamond$ is the empty sequence and for every $p \in V$ the constant function $c_p : \{\Diamond\} \to V$ is defined by $c_p(\Diamond) = p$.  
\end{claim}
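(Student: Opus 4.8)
The plan is to derive Claim \ref{claim:Los2} directly from Claim \ref{claim:Los}, exploiting the standard fact that passing to the quotient by a congruence relation preserves the truth of all first-order formulas. Write $q : \langle M^*; \in^*, =^* \rangle \to \langle M'; \in' \rangle$ for the quotient function $\langle k, b, f \rangle \mapsto [k,b,f]$. The whole argument amounts to composing Claim \ref{claim:Los} with the truth-preservation of $q$, so I expect it to be short.

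First I would record that $=^*$ is a congruence relation, which is already available from the discussion following Claim \ref{claim:Los}: the embedding $j^*$ shows $\langle M^*; \in^*, =^* \rangle$ is elementarily equivalent to $\langle V; \in, = \rangle$, and the (first-order) assertions that $=$ is an equivalence relation respected by $\in$ are true in $V$, hence in $M^*$. Thus the quotient $\langle M'; \in' \rangle$ is well defined, with $[k_1,b_1,f_1] \in' [k_2,b_2,f_2]$ if and only if $\langle k_1,b_1,f_1 \rangle \in^* \langle k_2,b_2,f_2 \rangle$ independently of representatives, and with true equality on $M'$ corresponding to $=^*$ on $M^*$.

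Next I would prove by induction on formulas that $q$ preserves truth values, i.e.\ that for every formula $\varphi$ and all elements $\langle k_1,b_1,f_1\rangle, \ldots, \langle k_n,b_n,f_n\rangle$ of $M^*$,
\[ \langle M^*; \in^*, =^* \rangle \models \varphi\big[\langle k_1,b_1,f_1 \rangle, \ldots\big] \iff \langle M'; \in' \rangle \models \varphi\big[[k_1,b_1,f_1], \ldots\big]. \]
The atomic case $v_i \in v_j$ holds by the definition of $\in'$, and the atomic case $v_i = v_j$ holds because, by construction, $[k_i,b_i,f_i] = [k_j,b_j,f_j]$ (true equality in $M'$) exactly when $\langle k_i,b_i,f_i \rangle =^* \langle k_j,b_j,f_j \rangle$. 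The boolean and quantifier cases are routine, the existential case using surjectivity of $q$. Combining this truth-preservation with the equivalence of (1) and (2) in Claim \ref{claim:Los} immediately yields the equivalence of (1) and (2) in Claim \ref{claim:Los2}.

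For the ``moreover'' part I would set $j' = q \circ j^*$. This is elementary as a composition of the elementary embedding $j^*$ of Claim \ref{claim:Los} with the truth-preserving quotient map $q$, and it takes the asserted value $j'(p) = q(\langle 0, \Diamond, c_p \rangle) = [0, \Diamond, c_p]$. The only point requiring even mild care is the atomic equality base case of the induction, and that is immediate from the definition of the quotient; everything else is entirely standard, so I do not anticipate a genuine obstacle here.
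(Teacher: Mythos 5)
Your proposal is correct and matches the paper's treatment: the paper likewise observes that elementary equivalence with $\langle V;\in,=\rangle$ (via $j^*$) makes $=^*$ a congruence, forms the quotient, and notes that a straightforward induction on formulas shows the quotient map preserves truth values, so that Claim \ref{claim:Los2} is just Claim \ref{claim:Los} restated through the quotient. The paper leaves the induction and the composition $j' = q \circ j^*$ implicit, whereas you spell them out, but the argument is the same.
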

 
The next step is to replace the structure $\langle M'; \in'\rangle$ with an isomorphic structure $\langle M; \in\rangle$ where $M$ is a transitive set and $\in$ is the true membership relation. We will do this using the Mostowski collapse.
The existence of the elementary embedding $j'$ implies that the structure $\langle M'; \in'\rangle$ satisfies the Axiom of Extensionality, so to show that its Mostowski collapse exists, it remains to verify the following two claims.
 
\begin{claim}\label{claim:well-founded}
 The relation $\in'$ on $M'$ is well-founded.
\end{claim}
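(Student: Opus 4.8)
The plan is to prove well-foundedness by contradiction, turning a hypothetical infinite $\in'$-descending chain into a single set witnessing that $h$ does \emph{not} preserve the relation $\WF$. Preservation of $\WF$ (``countable completeness'') is the one defining feature of $\mathscr{P}$-structures that has not yet been exploited, and it is precisely the property designed to encode well-foundedness, so this is where I expect the argument to live. The translation from an external $\in'$-chain to a failure of $\WF$ will go through \L o\'{s}'s theorem (Claim \ref{claim:Los2}) together with the foundation axiom in $V$.

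Concretely, suppose toward a contradiction that $\cdots \in' [ k_2, b_2, f_2 ] \in' [ k_1, b_1, f_1 ] \in' [ k_0, b_0, f_0 ]$. For each $n$ put $\ell_n = k_0 + \cdots + k_n$ and define
\[
 A_n = \big\{ a_0 a_1 \cdots a_n \in X^{\ell_n} : f_0(a_0) \ni f_1(a_1) \ni \cdots \ni f_n(a_n) \big\},
 \qquad A = \bigcup_{n<\omega} A_n,
\]
where $a_i$ ranges over $X^{k_i}$ and juxtaposition denotes concatenation. I would first check $\WF(A)$ using foundation: were $A$ not well-founded, DC would supply a single $g \in X^\omega$ with $g \restriction m \in A$ for infinitely many $m$; reading off from $g$ the blocks $a_i$ of lengths $k_i$, this would yield an infinite descending chain $f_0(a_0) \ni f_1(a_1) \ni f_2(a_2) \ni \cdots$ in $V$, which is impossible.

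Next I would show $\WF(h(A))$ fails. Applying \L o\'{s}'s theorem to the formula $\varphi_n$ asserting $v_0 \ni v_1 \ni \cdots \ni v_n$ --- which holds of $[ k_0,b_0,f_0 ], \ldots, [ k_n,b_n,f_n ]$ in $M'$ because it is just the conjunction of the first $n$ links of the chain --- yields $b_0 b_1 \cdots b_n \in h(A_n)$, the \L o\'{s}-set of $\varphi_n$ being exactly $A_n$. Since $A_n \subseteq A$ and $h$ preserves $\subseteq$, we get $b_0 \cdots b_n \in h(A)$ for every $n$. Setting $b = b_0 b_1 b_2 \cdots$, these are initial segments of $b$ lying in $h(A)$, so once their lengths $\ell_n$ are unbounded, $b \in Y^\omega$ witnesses $\neg\WF(h(A))$, contradicting preservation of $\WF$ by $h$ and finishing the proof.

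The step I expect to require care is exactly the phrase ``once their lengths are unbounded'': in the degenerate case where $k_n = 0$ for all large $n$, the lengths $\ell_n$ stabilize and $b$ is not a genuine infinite branch, so the $\WF$ argument does not directly apply. I would dispose of this case by hand. When the arities eventually vanish, the tail of the chain consists of elements $[ 0, \Diamond, f_n ]$, and unwinding $[ 0, \Diamond, f_{n+1} ] \in' [ 0, \Diamond, f_n ]$ using $h(\emptyset) = \emptyset$ and $h(X^0) = Y^0 = \{\Diamond\}$ reduces it to the literal statement $f_{n+1}(\Diamond) \in f_n(\Diamond)$, so the tail is an honest descending $\in$-chain in $V$, again contradicting foundation.
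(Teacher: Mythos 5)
Your proof is correct and follows essentially the same route as the paper: use \L o\'{s}'s theorem to get $b_1\cdots b_n \in h(A_n) \subseteq h(A)$ for all $n$, observe that $A$ is wellfounded by foundation in $V$, and contradict preservation of $\WF$. Your separate treatment of the degenerate case where the arities $k_n$ are eventually $0$ is a genuine refinement: the paper's proof asserts without comment that $\{b_1\cdots b_n : n<\omega\}$ is an infinite $\subsetneq$-chain, which is only literally true when infinitely many $k_n$ are nonzero, and your direct unwinding of the $k_n=0$ tail (via $h(\emptyset)=\emptyset$ and $h(X^0)=Y^0=\{\Diamond\}$) into an $\in$-descending sequence in $V$ closes that small gap.
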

\begin{proof}
 Here we use the fact that our homomorphism $h : \mathscr{P}_X \to \mathscr{P}_Y$ preserves the relation $\WF$. Suppose toward a contradiction that $\in'$ is not well-founded.  Then by DC there is an infinite decreasing sequence
 \[ [ k_1,b_1,f_1 ] \ni' [ k_2,b_2,f_2 ] \ni' [ k_3,b_3,f_3 ] \ni' \cdots,\]
 so for all $n < \omega$ we have
 \[ b_1 \cdots b_n \in h(\{a_1\cdots a_n : f_{n}(a_{n}) \in \cdots \in f_1(a_1)\})\]
 by \L o\'{s}'s theorem (Claim \ref{claim:Los2}). Because $h$ preserves $\subset$ it follows that for all $n<\omega$,
 \[ b_1 \cdots b_n \in h(A) \text{ where } A = \bigcup_{n <\omega} \{a_1\cdots a_n : f_{n}(a_{n}) \in \cdots \in f_1(a_1)\}.\]
 Therefore $h(A)$ contains the infinite chain $\{b_1 \cdots b_n : n <\omega\}$, and because $h$ preserves $\WF$ it follows that $A$ also contains some infinite chain. However, an infinite chain in $A$ would produce an infinite decreasing $\in$-sequence in $V$, contradicting the well-foundedness of $\in$.
\end{proof}
 
\begin{claim}\label{claim:set-like}
 The relation $\in'$ on $M'$ is set-like.
\end{claim}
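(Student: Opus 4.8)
The relation $\in'$ is \emph{set-like} provided that for every element $[k,b,f]$ of $M'$ the class of its $\in'$-predecessors, namely $\{[k',b',f'] \in M' : [k',b',f'] \in' [k,b,f]\}$, is a set rather than a proper class. This is a genuine requirement, since $M'$ is a proper class (the functions $f$ range over all of $V$). The plan is to show that every $\in'$-predecessor of $[k,b,f]$ can be represented by a function whose range is contained in the single fixed set $B = \bigcup \ran(f)$. Since $X$, $Y$, and $B$ are all sets, the collection of triples $\langle k', b', g\rangle$ with $k' < \omega$, $b' \in Y^{k'}$, and $g : X^{k'} \to B$ is a set, and hence so is its image under the quotient map $\langle k',b',g\rangle \mapsto [k',b',g]$; once every predecessor is shown to lie in this image, set-likeness follows.

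First I would fix a predecessor $[k',b',f'] \in' [k,b,f]$ and set $\Delta = \{a' \in X^{k'} : f'(a') \in B\}$. Unwinding the definition of $\in'$ gives $b'b \in h(\{a'a : f'(a') \in f(a)\})$, and since $f(a) \subseteq B$ for every $a \in X^k$, this set is contained in $\Proj^{-1}_{k'+k,\langle 1,\ldots,k'\rangle}(\Delta)$, the inverse image of $\Delta$ under the projection onto the first $k'$ coordinates. Using that $h$ preserves $\subseteq$ and the operator $\Proj^{-1}$, I would conclude $b'b \in \Proj^{-1}_{k'+k,\langle 1,\ldots,k'\rangle}(h(\Delta))$, and projecting onto the first $k'$ coordinates yields $b' \in h(\Delta)$.

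Next, fixing some $b_0 \in B$ (the case $B = \emptyset$ being trivial, as then $[k,b,f]$ has no predecessors), I would define $f'' : X^{k'} \to B$ by setting $f''(a') = f'(a')$ when $f'(a') \in B$ and $f''(a') = b_0$ otherwise, so that $\Delta = \{a' : f'(a') = f''(a')\}$. The goal is then $[k',b',f''] = [k',b',f']$, which by \L o\'{s}'s theorem (Claim \ref{claim:Los2}) amounts to $b'b' \in h(\{a_1 a_2 : f'(a_1) = f''(a_2)\})$. Reflexivity of equality together with Claim \ref{claim:Los2} gives $b'b' \in h(\{a_1 a_2 : f'(a_1) = f'(a_2)\})$, while the preceding paragraph, applied to the projection onto the \emph{second} block of coordinates, gives $b'b' \in \Proj^{-1}_{2k',\langle k'+1,\ldots,2k'\rangle}(h(\Delta))$. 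The intersection of these two sets is contained in $\{a_1 a_2 : f'(a_1) = f''(a_2)\}$, because $a_2 \in \Delta$ forces $f'(a_2) = f''(a_2)$; hence preservation of $\cap$ and of $\subseteq$ delivers $b'b' \in h(\{a_1 a_2 : f'(a_1) = f''(a_2)\})$. Thus $[k',b',f'] = [k',b',f'']$ is represented by a function into $B$, so it lies in the set described in the first paragraph, completing the argument.

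The verifications that the relevant collections are sets and that the stated containments hold are routine. The one step demanding care is the last: showing that altering a representing function off the ``relevant'' coordinate set $\Delta$ does not change the equivalence class. This is precisely where the preservation properties of $h$ do the real work, combining reflexivity of $=$ (via \L o\'{s}'s theorem) with the membership $b' \in h(\Delta)$ through the coordinate-projection and intersection operators; I expect this bookkeeping, rather than any conceptual difficulty, to be the main obstacle.
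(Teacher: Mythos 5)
Your proof is correct and follows essentially the same route as the paper: both replace the representing function $f'$ of a predecessor by one agreeing with it on the set where its values lie in $\bigcup\ran(f)$ and taking values in that fixed set otherwise, so that only a set of representatives remains, and both verify that the equivalence class is unchanged using preservation of $\subseteq$ and $\Proj^{-1}$ together with \L o\'{s}'s theorem. The only differences are cosmetic: the paper sends the irrelevant arguments to $\emptyset$ rather than to a point $b_0 \in B$ (avoiding your case split on $B = \emptyset$), and it establishes $[k',b',f'] = [k',b',f'']$ by passing through a diagonal set rather than by intersecting with the reflexivity set, but this is the same computation organized slightly differently.
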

\begin{proof}
 Let $[k_2, b_2, f_2]  \in M'$.
 We will show there is only a set (rather than a proper class) of elements $[k_1, b_1, f_1]  \in M'$
 such that $[k_1,b_1, f_1] \in' [k_2,b_2, f_2]$. Given $[k_1,b_1, f_1] \in' [k_2,b_2, f_2]$, we will define a ``small'' approximation
 $\bar{f}_1$ to $f_1$ that is bounded by $f_2$ in a certain sense.  Namely, we define the function
 $\bar{f}_1 : X^{k_1} \to V$ by
 \[\bar{f}_1(a_1) = \begin{cases}
             f_1(a_1) & \text{if $f_1(a_1) \in \bigcup\ran(f_2)$,}\\
             \emptyset & \text{otherwise.}
            \end{cases}\]
 (There is nothing special about $\emptyset$ here; we could use any other fixed value.)
 There is only a set of possibilities for $\bar{f}_1$ because of the restriction on its range, so it suffices to show that
 $[k_1,b_1,f_1]$ is equal to $[k_1,b_1, \bar{f}_1]$.  Indeed, we have
 \begin{align*}
  [k_1,b_1,f_1] \in' [k_2,b_2,f_2]
  \implies
  & b_1b_2 \in h\big(\big\{a_1a_2 : f_1(a_1) \in f_2(a_2)\big\}\big)\\
  \implies
  & b_1b_2 \in h\big(\big\{a_1a_2 : f_1(a_1) = \bar{f}_1(a_1)\big\}\big) && (\subset)\\
  \implies
  & b_1b_1 \in h\big(\big\{a_1a_1 : f_1(a_1) = \bar{f}_1(a_1)\big\}\big) && (\Proj^{-1})\footnotemark\\
  \implies
  & b_1b_1 \in h\big(\big\{a_1\bar{a}_1 : f_1(a_1) = \bar{f}_1(\bar{a}_1)\big\}\big) && (\subset)\\
  \implies
  &[k_1,b_1,f_1] = [k_1,b_1, \bar{f}_1],
 \end{align*}
 where $\bar{a}_1$ denotes an arbitrary element of $X^{k_1}$ not necessarily equal to $a_1$.
\end{proof}
 
\footnotetext{More specifically, $\Proj^{-1}_{k_1+k_2, \langle 1, \ldots, k_1,1,\ldots, k_1\rangle}$.}
  
We may therefore define the Mostowski collapse:
  
\begin{defn}
 $\langle M; \in\rangle$ is the Mostowski collapse of $\langle M'; \in' \rangle$.
\end{defn}

Then we have
\[M = \big\{ \llbracket k, b, f\rrbracket : k < \omega \text{ and } b \in Y^k \text{ and } f: X^k \to V\big\},\]
where $\llbracket k,b,f\rrbracket$ denotes the image of $[k,b,f]$ under the Mostowski collapse function. Because the Mostowski collapse function is an isomorphism, we may restate \L o\'{s}'s theorem (Claim \ref{claim:Los2}) for the collapsed structure $\langle M; \in \rangle$ as follows.
 
\begin{claim}\label{claim:Los3}
 For every formula $\varphi$ and every $n<\omega$ sufficiently large that all free variables of $\varphi$ are contained in the set $\{v_1,\ldots,v_n\}$, the following statements are equivalent for all elements $\llbracket k_1,b_1,f_1 \rrbracket,\ldots,\llbracket k_n,b_n,f_n\rrbracket$ of $M$:
 \begin{enumerate}
  \item $\langle M; \in\rangle \models \varphi\big[ \llbracket k_1,b_1,f_1\rrbracket, \ldots, \llbracket k_n,b_n,f_n \rrbracket\big]$.
  \item $b_1 \cdots b_n \in h\big(\big\{ a_1 \cdots a_n :  \varphi[f_1(a_1),\ldots,f_n(a_n)]\big\}\big)$.
 \end{enumerate}
  Moreover, there is an elementary embedding
 \[j : \langle V; \in \rangle \to \langle M; \in\rangle\] defined by
 $ j(p) = \llbracket 0, \Diamond, c_p \rrbracket$
 where $\Diamond$ is the empty sequence and for every $p \in V$ the constant function $c_p : \{\Diamond\} \to V$ is defined by $c_p(\Diamond) = p$.  
\end{claim}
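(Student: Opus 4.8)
The plan is to deduce Claim \ref{claim:Los3} directly from Claim \ref{claim:Los2} by transporting everything across the Mostowski collapse. Write $\pi$ for the Mostowski collapse function, so that $\pi : \langle M'; \in' \rangle \to \langle M; \in \rangle$ is an isomorphism with $\pi([k,b,f]) = \llbracket k,b,f \rrbracket$. The fact that $\pi$ is a well-defined isomorphism is exactly what the three preceding results secure: extensionality of $\langle M'; \in' \rangle$ (which follows from the existence of the elementary embedding $j'$), well-foundedness of $\in'$ (Claim \ref{claim:well-founded}), and set-likeness of $\in'$ (Claim \ref{claim:set-like}).

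For the main equivalence, I would invoke the elementary observation that an isomorphism of first-order structures preserves the truth value of every formula under any assignment of elements to its free variables. Applying this to $\pi$ gives, for all elements $[k_1,b_1,f_1],\ldots,[k_n,b_n,f_n]$ of $M'$,
\[ \langle M; \in \rangle \models \varphi\big[\llbracket k_1,b_1,f_1\rrbracket,\ldots,\llbracket k_n,b_n,f_n\rrbracket\big] \iff \langle M'; \in' \rangle \models \varphi\big[[k_1,b_1,f_1],\ldots,[k_n,b_n,f_n]\big]. \]
Chaining this with the equivalence of (1) and (2) from Claim \ref{claim:Los2} immediately yields the equivalence of (1) and (2) in Claim \ref{claim:Los3}, since the right-hand condition (2) is literally the same in both claims.

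For the ``moreover'' part, I would set $j = \pi \circ j'$. Since $j'$ is elementary by Claim \ref{claim:Los2} and $\pi$ is an isomorphism (hence elementary), the composition $j : \langle V; \in \rangle \to \langle M; \in \rangle$ is an elementary embedding. Evaluating it on a point $p \in V$ gives $j(p) = \pi(j'(p)) = \pi([0,\Diamond,c_p]) = \llbracket 0,\Diamond,c_p\rrbracket$, which is precisely the asserted formula.

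There is no substantive obstacle here: all the mathematical content of \L o\'s's theorem was established in Claim \ref{claim:Los} and carried through the quotient in Claim \ref{claim:Los2}, and the present claim merely records the final change of representatives from equivalence classes $[k,b,f]$ to their collapses $\llbracket k,b,f\rrbracket$. The only point that needs care is purely bookkeeping, namely checking that $\pi$ respects the labeling $[k,b,f] \mapsto \llbracket k,b,f\rrbracket$, and this holds by the very definition of $\llbracket k,b,f\rrbracket$ as the image of $[k,b,f]$ under $\pi$.
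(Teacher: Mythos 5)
Your proposal is correct and matches the paper's intent exactly: the paper justifies Claim \ref{claim:Los3} in one line by observing that the Mostowski collapse is an isomorphism, and your argument simply spells out that observation (transport of truth values across the isomorphism, plus $j = $ collapse $\circ\, j'$ for the ``moreover'' part). No gaps.
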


It remains to prove that the homomorphism derived from this elementary embedding $j$ is equal to $h$. We will need the following claim. (Note that the claim immediately implies $Y \subset M$, which is part of our desired conclusion):
 
\begin{claim}
 For all $y \in Y$, we have $y = \llbracket 1, \langle y \rangle, \pi\rrbracket$ where $\pi : X^1 \to X$ is the trivial projection function defined by $\pi(\langle x \rangle) = x$.
\end{claim}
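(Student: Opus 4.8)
The plan is to prove the claim by $\in$-induction on $y \in Y$. This induction is legitimate because $Y$ is transitive: whenever $y \in Y$ and $z \in y$ we also have $z \in Y$, so I may assume as inductive hypothesis that $\llbracket 1, \langle z\rangle, \pi\rrbracket = z$ for every $z \in y$. Since $\langle M; \in\rangle$ is the Mostowski collapse of $\langle M'; \in'\rangle$, the collapse map satisfies
\[ \llbracket 1, \langle y\rangle, \pi\rrbracket = \big\{\llbracket k', b', f'\rrbracket : [k',b',f'] \in' [1, \langle y\rangle, \pi]\big\}, \]
so the whole problem reduces to identifying the $\in'$-predecessors of the ``generator'' $[1, \langle y\rangle, \pi]$ and then invoking the inductive hypothesis.

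Concretely, I would establish the single biconditional
\[ [k',b',f'] \in' [1, \langle y\rangle, \pi] \iff \exists z \in y \; \big(\llbracket k',b',f'\rrbracket = \llbracket 1, \langle z\rangle, \pi\rrbracket\big). \]
Granting it, the collapse identity above becomes $\llbracket 1, \langle y\rangle, \pi\rrbracket = \{\llbracket 1, \langle z\rangle, \pi\rrbracket : z \in y\}$, which by the inductive hypothesis equals $\{z : z \in y\} = y$, closing the induction (the case $y = \emptyset$ being subsumed, as then there are no $z \in y$). Note that the backward direction of the biconditional, applied with $[k',b',f'] = [1,\langle z\rangle,\pi]$ and witness $z$ itself, is what supplies the inclusion $\supseteq$, while the forward direction supplies $\subseteq$.

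To prove the biconditional I would unfold the definition of $\in'$ through $\in^*$: the relation $[k',b',f'] \in' [1, \langle y\rangle, \pi]$ holds iff $b' \conc \langle y\rangle \in h(D)$, where $D = \{a' \conc \langle x\rangle \in X^{k'+1} : f'(a') \in x\}$. The crucial observation is that this membership set is a \emph{bounded projection} of the graph of $f'$: setting $A = \{a' \conc \langle x\rangle \in X^{k'+1} : f'(a') = x\}$, one checks directly from the definitions that $D = \BP_{k'}(A)$, since $\exists z \in x\,(f'(a') = z)$ is equivalent to $f'(a') \in x$ (and this even handles the case $f'(a') \notin X$ correctly, since $X$ is transitive). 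Because $h$ preserves $\BP_{k'}$, we obtain $h(D) = \BP_{k'}(h(A))$, and unwinding the bounded quantifier in $\mathscr{P}_Y$ gives $b' \conc \langle y\rangle \in \BP_{k'}(h(A))$ iff $\exists z \in y\,(b' \conc \langle z\rangle \in h(A))$. Finally, \L o\'{s}'s theorem (Claim \ref{claim:Los3}) applied to the formula $v_1 = v_2$ yields $b' \conc \langle z\rangle \in h(A) \iff \llbracket k',b',f'\rrbracket = \llbracket 1, \langle z\rangle, \pi\rrbracket$, and chaining these equivalences produces the biconditional.

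I expect the main obstacle to be exactly this normality step: recognizing $D$ as the bounded projection $\BP_{k'}$ of the graph $A$ of $f'$ and thereby bringing preservation of $\BP$ to bear. The remaining ingredients — the definition of $\in^*$, the defining property of the Mostowski collapse, and \L o\'{s} for equality — are routine bookkeeping. What does the real work is the \emph{boundedness} of $\BP$, namely that its existential quantifier $\exists z \in x_{k+1}$ ranges over elements rather than arbitrary sets; this is precisely what lets the witness $z$ be pulled down into the genuine member $y$, forcing every $\in'$-predecessor of the generator to have the special form $\llbracket 1, \langle z\rangle, \pi\rrbracket$ with $z \in y$.
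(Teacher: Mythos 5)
Your proposal is correct and follows essentially the same route as the paper: an $\in$-induction on $y$ whose inductive step hinges on recognizing the membership set $\{a\conc\langle x\rangle : f(a)\in x\}$ as the bounded projection $\BP_{k}$ of the graph $\{a\conc\langle x\rangle : f(a)=x\}$, applying preservation of $\BP_k$ by $h$, and finishing with \L o\'{s} for the equality formula. The only cosmetic difference is that you phrase the computation as a characterization of the $\in'$-predecessors of $[1,\langle y\rangle,\pi]$ fed into the recursive definition of the Mostowski collapse, whereas the paper runs the same chain of equivalences directly as a mutual-inclusion argument between $\llbracket 1,\langle y\rangle,\pi\rrbracket$ and $y$ inside $M$.
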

\begin{proof}
 Here we use the fact that our homomorphism $h$ preserves the bounded projection operators $\BP_k$. The proof is by $\in$-induction on $y \in Y$.\footnote{For a similar argument pertaining to the other type of extender, see Martin and Steel \cite[Lemma 1.5]{MarSteProjectiveDeterminacy}.}  Let $y \in Y$ and assume that $y' = \llbracket 1, \langle y' \rangle, \pi\rrbracket$ for all $y' \in y$. This implies that $y \subset M$.  Also, we have $\llbracket 1, \langle y \rangle, \pi\rrbracket \subset M$ by transitivity of $M$. For every element $\llbracket k,b,f\rrbracket$ of $M$, we have
 \begin{align*}
  \llbracket k,b,f\rrbracket \in \llbracket 1, \langle y \rangle, \pi\rrbracket  \iff
   & b \langle y \rangle \in h\big(\big\{ a \langle x \rangle : f(a) \in \pi(\langle x\rangle)\big\}\big)\\
   \iff
   & b \langle y \rangle \in h\big(\big\{ a \langle x \rangle : f(a) \in x\big\}\big)\\
   \iff
   & b \langle y \rangle \in h\big(\BP_k\big(\big\{ a \langle x \rangle : f(a) = x\big\}\big)\big)\\
   \iff
   & b \langle y \rangle \in \BP_k\big(h\big(\big\{ a \langle x \rangle : f(a) = x\big\}\big)\big)\\
   \iff
   & (\exists y' \in y)\;  b \langle y' \rangle \in h\big(\big\{ a \langle x \rangle : f(a) = x\big\}\big)\\
   \iff
   & (\exists y' \in y)\;  b \langle y' \rangle \in h\big(\big\{ a \langle x \rangle : f(a) = \pi(\langle x \rangle)\big\}\big)\\
   \iff
   & (\exists y' \in y)\;  \llbracket k,b,f\rrbracket = \llbracket 1, \langle y' \rangle, \pi\rrbracket\\
   \iff
   & (\exists y' \in y)\;  \llbracket k,b,f\rrbracket = y'\\
   \iff
   & \llbracket k,b,f\rrbracket \in y,
 \end{align*}
 so the sets $\llbracket 1, \langle y \rangle, \pi\rrbracket$ and $y$ are subsets of each other and are therefore equal.
\end{proof}

Finally we will show that $h$ is derived from $j$:
 
\begin{claim}
 For all $A \subset X^{\mathord{<}\omega}$, we have $h(A) = j(A) \cap Y^{\mathord{<}\omega}$.
\end{claim}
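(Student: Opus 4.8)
The plan is to verify the set identity $h(A) = j(A) \cap Y^{\mathord{<}\omega}$ pointwise. Since $h$ maps into $\powerset(Y^{\mathord{<}\omega})$ we have $h(A) \subseteq Y^{\mathord{<}\omega}$, so the intersection on the right is the only thing doing any work, and it suffices to show that for every $b \in Y^{\mathord{<}\omega}$ we have $b \in h(A) \iff b \in j(A)$. So I would fix such a $b$, say $b = \langle y_1,\ldots,y_k\rangle \in Y^k$, and aim to compute membership of $b$ in $j(A)$ via \L o\'{s}'s theorem.

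The main step is to apply Claim \ref{claim:Los3} to the representations already on hand: by the preceding claim each coordinate is represented as $y_i = \llbracket 1, \langle y_i\rangle, \pi\rrbracket$, and by the definition of $j$ we have $j(A) = \llbracket 0, \Diamond, c_A\rrbracket$. I would feed into \L o\'{s}'s theorem the formula $\varphi(v_1,\ldots,v_{k+1})$ expressing ``$\langle v_1,\ldots,v_k\rangle \in v_{k+1}$'', which is first-order in $\in$ because finite-sequence coding is definable. Because $M$ is transitive and sequence-coding is absolute, the truth-value side $\langle M; \in\rangle \models \varphi[y_1,\ldots,y_k, j(A)]$ is literally the statement $b \in j(A)$. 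On the other side of Claim \ref{claim:Los3} the concatenation $\langle y_1\rangle \cdots \langle y_k\rangle \Diamond$ collapses to $b$; the generic concatenation $\langle x_1\rangle \cdots \langle x_k\rangle \Diamond$ collapses to $\langle x_1,\ldots,x_k\rangle \in X^k$; and $\varphi[\pi(\langle x_1\rangle),\ldots,\pi(\langle x_k\rangle), c_A(\Diamond)]$ reduces to $\langle x_1,\ldots,x_k\rangle \in A$. Hence the indexing set is exactly $A \cap X^k$, and \L o\'{s}'s theorem gives $b \in j(A) \iff b \in h(A \cap X^k)$.

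To finish I would invoke preservation of intersections together with $h(X^k) = Y^k$ to get $h(A \cap X^k) = h(A) \cap Y^k$; since $b \in Y^k$ this yields $b \in h(A \cap X^k) \iff b \in h(A)$, and chaining the equivalences gives $b \in j(A) \iff b \in h(A)$ as required. Worth emphasizing is that this route only uses the $k=1$ case of the representation of elements of $Y$, so no separate lemma representing longer sequences as $\llbracket k,b,\cdot\rrbracket$ is needed. The one point requiring care—rather than any genuine obstacle—is the bookkeeping inside the \L o\'{s} computation: tracking which $\Proj^{-1}$ instance certifies that the concatenated one-element blocks reassemble into the length-$k$ sequence, and confirming that transitivity of $M$ genuinely lets the internally evaluated sequence-membership statement be read as honest membership $b \in j(A)$. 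Everything else is the routine preservation of boolean structure already established for $h$.
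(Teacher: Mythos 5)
Your proposal is correct and follows essentially the same route as the paper: invoke \L o\'{s}'s theorem for the formula ``$\langle v_1,\ldots,v_k\rangle \in v_{k+1}$'' applied to the representations $y_i = \llbracket 1,\langle y_i\rangle,\pi\rrbracket$ and $j(A) = \llbracket 0,\Diamond,c_A\rrbracket$, reduce the indexing set to $A \cap X^k$, and finish with preservation of boolean structure. Your explicit note that $h(A\cap X^k) = h(A)\cap Y^k$ is a step the paper leaves implicit in its final equivalence, but the argument is the same.
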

\begin{proof}
 Let $A \subset X^{\mathord{<}\omega}$.  Then $h(A) \subset Y^{\mathord{<}\omega}$ and for all $\langle y_1,\ldots,y_k \rangle \in Y^{\mathord{<}\omega}$ we have
 \begin{align*}
  &\langle y_1,\ldots,y_k \rangle \in j(A)\\
  \iff
  &\langle M; \in \rangle \models \langle y_1,\ldots,y_k \rangle \in j(A)\\
  \iff
  &\langle M; \in \rangle \models \big\langle \llbracket 1, \langle y_1 \rangle, \pi\rrbracket,\ldots,
  \llbracket 1, \langle y_k \rangle, \pi\rrbracket \big\rangle \in \llbracket 0, \Diamond, c_A\rrbracket\\
  \iff
  & \langle y_1 \rangle \cdots \langle y_k \rangle \Diamond \in h\Big(\Big\{ 
  \langle x_1 \rangle \cdots \langle x_k \rangle \Diamond: \big\langle \pi(\langle x_1\rangle) ,\ldots \pi(\langle x_k\rangle) \big\rangle \in 
  c_A(\Diamond) \Big\} \Big)\\
  \iff
  &\langle y_1, \ldots y_k \rangle \in h\big(\big\{\langle x_1,\ldots, x_k \rangle : \langle x_1,\ldots, x_k \rangle \in 
  A \big\} \big) \\
  \iff
  & \langle y_1, \ldots y_k \rangle \in h(A). && \qedhere
 \end{align*}
\end{proof}
 
Applying this claim to the set $A = X^{\mathord{<}\omega}$ yields $h(X^{\mathord{<}\omega}) = j(X^{\mathord{<}\omega}) \cap Y^{\mathord{<}\omega}$. On the other hand, because $h$ is a boolean homomorphism we have $h(X^{\mathord{<}\omega}) = Y^{\mathord{<}\omega}$. It follows that $Y^{\mathord{<}\omega} \subset j(X^{\mathord{<}\omega})$ and therefore $Y \subset j(X)$, completing the proof of Lemma \ref{lem:every-extender-is-derived}.

\section{Acknowledgments}

The author thanks Joan Bagaria for several corrections to an earlier version of this paper, and Martin Zeman and Takehiko Gappo for helpful conversations about extenders.

\bibliographystyle{plain}
\bibliography{WVP}

\end{document}